\documentclass[12pt,centertags,oneside]{amsart}

\usepackage{amscd,amsxtra,calc}
\usepackage{cmmib57}
\usepackage{url}

\usepackage{amscd}
\usepackage{pstricks}
\usepackage{color}
\setcounter{MaxMatrixCols}{25}

\usepackage[a4paper,width=16.2cm,top=3cm,bottom=3cm]{geometry}

\numberwithin{equation}{section}

\setcounter{MaxMatrixCols}{25}

%%%%%%%%%%%%%%%%%%%% Text italic %%%%%%%%%%%%%%%%%%%%%%%%%%%%
\theoremstyle{plain}
\newtheorem{thm}{Theorem}[section]
\newtheorem{theorem}[thm]{Theorem}

\newtheorem{lemma}[thm]{Lemma}
\newtheorem{corollary}[thm]{Corollary}
\newtheorem{proposition}[thm]{Proposition}
%%%%%%%%%%%%%%%%%%%% Text roman %%%%%%%%%%%%%%%%%%%%%%%%%%%%%
\theoremstyle{definition}
\newtheorem{remark}[thm]{Remark}

\newtheorem{definition}[thm]{Definition}

\numberwithin{equation}{section}
%%%%%%%% Special symbols %%%%%%%%%%%%%%%%%%%%%%%%%%%%%%%
% Skriptbuchstaben

%%%%%%%%%%%%%%%%%%%%%%%%%%%%%%%%%%%%%%%%%%%%%%%%%%%%%%%%%%%%%%

%\newcommand{\red}{{\rm red}}

% Skriptbuchstaben

\newcommand{\sO}{{\mathcal O}}

% Sonderbuchstaben mit Doppellinie

\newcommand{\C}{{\mathbb C}}

\newcommand{\F}{{\mathbb F}}
\newcommand{\G}{{\mathbb G}}

\renewcommand{\P}{{\mathbb P}}
\newcommand{\Q}{{\mathbb Q}}
\newcommand{\R}{{\mathbb R}}

\newcommand{\Z}{{\mathbb Z}}

\newcommand{\id}{{\rm id\hspace{.1ex}}}

\newcommand{\Aut}{{\rm Aut\hspace{.1ex}}}

%%%%%%%%%%%%%%%%%%%%%%%%%%%%%%%%%%%%%%%%%%%%%%%%%%%%%%%%%%%%%%

\title [nonfinitely generated automorphism group]{A surface in odd characteristic with discrete and non-finitely generated automorphism group}

%\author{Tien-Cuong Dinh}
%\address{Department of Mathematics, National University of Singapore, 10, 
%Lower Kent Ridge Road, Singapore 119076}
%\email{matdtc@nus.edu.sg}

\author{Keiji Oguiso}
\address{Mathematical Sciences, the University of Tokyo, Meguro Komaba 3-8-1, Tokyo, Japan, and National Center for Theoretical Sciences, Mathematics Division, National Taiwan University, Taipei, Taiwan}
\email{oguiso@ms.u-tokyo.ac.jp}

\thanks{The author is supported by JSPS Grant-in-Aid (S) 15H05738, JSPS Grant-in-Aid (B) 15H03611, KIAS Scholar Program and by NCTS Scholar Program.}

\begin{document}

\maketitle
\begin{abstract}
It was proved by Tien-Cuong Dinh and me that there is a smooth complex projective surface whose automorphism group is discrete and not finitely generated. In this paper, after observing finite generation of the automorphism group of any smooth projective surface birational to any K3 surface over any algebraic closure of the prime field of odd characteristic, we will show that there is a smooth projective surface, birational to some K3 surface, such that the automorphism group is discrete and not finitely generated, over any algebraically closed field of odd characteristic of positive transcendental degree over the prime field.
\end{abstract}

\section{Introduction}

Let $p$ be an odd prime integer and let $\F_p := \Z/(p)$ be the prime field of characteristic $p$. Let $\F_p(t)$ be a purely transcendental extension of degree one of the field $\F_p$. We choose and fix an algebraic closure $k_0$ of $\F_p$ and an algebraically closed field $k$ such that $\F_p(t) \subset k$, eg. an algebraic closure of the field $\F_p(t)$. Note that any algebraically closed field of characteristic $p$ is isomorphic to either $k_0$ or some $k$ defined here. For our purpose, we may and do assume that 
$$\F_p \subset \F_{p^n} \subset k_0 \subset k\,\,$$ 
for all integers $n \ge 1$. Here $\F_{p^n}$ is a finite field of cardinality $p^n$. 

For a variety $V$ defined over a field $K$, we denote the group of the automorphisms of $V$ over $K$ by ${\rm Aut}\, (V/K)$ (See also Remark \ref{rem1}) and for a field extension $K \subset L$, we denote $V \times_{{\rm Spec}\, K} {\rm Spec}\, L$ by $V_L$.

Our main results are Theorem \ref{thm1} and 
Corollary \ref{cor1} below. Both (1) and (2) in Theorem \ref{thm1} are related to a question posed by \cite[Problem 1.2]{DO19}; (2) gives an affirmative answer in any odd characteristic, 
whereas (1) provides a negative evidence over $k_0$. 

\begin{theorem}\label{thm1}
\begin{enumerate}
\item Let $k_0$ be the base field. Then for any smooth projective surface $Y$ birational to a K3 surface over $k_0$ and for any field extension $k_0 \subset L$, the automorphism group ${\rm Aut}(Y_L/L)$ is finitely generated. 
\item Let $k$ be the base field. Then there is a smooth projective surface $Y$ birational to some K3 surface such that ${\rm Aut}\, (Y/k)$ is not finitely generated. 
\end{enumerate}
\end{theorem}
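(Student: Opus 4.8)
The plan is to route both statements through the unique K3 minimal model and the induced action on a finite blow-up configuration. A smooth projective surface $Y$ birational to a K3 surface has Kodaira dimension $0$, so it has a unique minimal model $X$, namely the K3 surface, and the contraction $\pi\colon Y\to X$ is a composition of blow-ups at a finite configuration $Z$ of (possibly infinitely near) points. Because $X$ is the unique minimal model, $\pi$ is canonical: every automorphism of $Y$ descends to a birational self-map of $X$, which is biregular since $X$ is a K3 surface, and preserves $Z$; conversely each element of $\Aut(X)$ fixing $Z$ lifts uniquely to $Y$, the lift being determined on the dense open set where $\pi$ is an isomorphism. This yields a canonical isomorphism
\eq{eq:red}{\Aut(Y)\;\cong\;\Aut(X,Z):=\{\,g\in\Aut(X)\ :\ g(Z)=Z\,\},}
reducing everything to the stabilizer of a finite configuration inside $\Aut(X)$. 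I take as given that $\Aut(X)$ itself is finitely generated: this is the Morrison--Kawamata cone conjecture for K3 surfaces, due to Sterk in characteristic zero and extended to K3 surfaces in positive characteristic by subsequent work on the cone conjecture.

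For statement (1) the decisive feature is that $k_0=\overline{\F_p}$ has transcendence degree $0$ over $\F_p$, so $X$ and $Z$, together with a finite generating set of $\Aut(X/k_0)$, are all defined over a common finite field $\F_q$ (enlarging $q$ so that the finitely many support points of $Z$, and the relevant infinitely near points in the tower of blow-ups, also appear over $\F_q$). Each such generator sends $\F_q$-rational points to $\F_q$-rational points, so the whole group $\Aut(X/k_0)$ preserves the finite set $X(\F_q)$, giving a homomorphism $\Aut(X/k_0)\to\mathrm{Sym}(X(\F_q))$ onto a finite group. Its kernel $G_0$ has finite index and fixes every $\F_q$-point, hence fixes $Z$, so $G_0\subseteq\Aut(X,Z)$. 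Thus $\Aut(X,Z)$ contains a finite-index subgroup of the finitely generated group $\Aut(X/k_0)$ and is therefore finitely generated, and so is $\Aut(Y/k_0)$ by \eqref{eq:red}. Conceptually: over $\overline{\F_p}$ every orbit of a finitely generated group of automorphisms is finite, which is exactly what rules out an infinitely generated stabilizer.

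To pass from $k_0$ to an arbitrary extension $k_0\subset L$, note that $Y$ carries no nonzero global vector field, as for its K3 minimal model, so the automorphism group scheme $\underline{\Aut}(Y)$ is \'etale over $k_0$ and its geometric points do not change under extension of algebraically closed fields; hence $\Aut(Y_{\overline L}/\overline L)=\Aut(Y/k_0)$. Since $k_0=\overline{\F_p}\subseteq L$, every one of these automorphisms is already defined over $L$, so $\Aut(Y_L/L)=\Aut(Y/k_0)$ is finitely generated, proving (1).

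For statement (2) the strategy is to reverse this mechanism. When $k$ has positive transcendence degree over $\F_p$ there exist points of $X$ not defined over $\overline{\F_p}$, whose orbits under a fixed positive-entropy automorphism are infinite and aperiodic, precisely the situation the finite-field argument forbids. I would fix a K3 surface $X$ carrying a rich automorphism group already defined over $\overline{\F_p}$, base change to $k$, and choose a finite configuration $Z$ supported at points built from a transcendental parameter $t$, arranged so that the stabilizer $\Aut(X,Z)$ is an infinitely generated subgroup of the finitely generated group $\Aut(X)$; blowing up $Z$ then produces $Y$ with $\Aut(Y)\cong\Aut(X,Z)$ not finitely generated by \eqref{eq:red}, discreteness being automatic from the vanishing of vector fields. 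This is the positive-characteristic counterpart of the complex example of \cite{DO19}, with $t$ playing the role that the transcendence of $\C$ over $\Q$ plays there. The main obstacle is exactly this construction: one must exhibit infinitely many negative curves, or lattice-theoretic walls, on $Y$ that are permuted by $\Aut(Y)$ so that the cone conjecture fails and no finite generating set can exist, and then verify --- using the genericity provided by $t$ --- that the resulting group is genuinely not finitely generated. By contrast, once \eqref{eq:red} is in place, statement (1) is the soft finite-field observation above.
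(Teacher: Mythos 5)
Your reduction for part (1) follows the same broad strategy as the paper (finite-field argument plus passage to a finite-index subgroup and the Reidemeister--Schreier fact that finite-index subgroups of finitely generated groups are finitely generated), but it contains a genuine gap at the step ``$G_0$ fixes every $\F_q$-point, hence fixes $Z$.'' This is false when the configuration $Z$ contains infinitely near points, which is exactly the general case you set up. An automorphism $g$ of $X$ fixing a point $P\in X(\F_q)$ lifts to the blow-up at $P$, but the lift acts on the exceptional curve $E_P\simeq \P^1$ through the projectivized differential $dg_P$, which is in general nontrivial; so $g$ can move an infinitely near point of $Z$ lying over $P$ even though it fixes $P$. (Note the irony: part (2) of the theorem, both here and in the paper, is powered by precisely this phenomenon --- the surface realizing non-finite generation is a blow-up at $P$ followed by a blow-up at a point $Q$ on $E_P$, and everything hinges on which automorphisms fixing $P$ also fix the direction $Q$.) The gap is repairable, but the repair is real content: one must iterate the finite-field argument up the tower. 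The paper does this by blowing up \emph{all} $\F_q$-points of $X$, then all $\F_q$-points of the resulting exceptional locus, and so on $n+1$ times, producing a tower $Z\to X$ that dominates $Y$ and to which every element of $\Aut(X/k_0)$ (being defined over $\F_q$) lifts; the kernel of the permutation action on the finitely many exceptional components then has finite index, preserves each component, and hence descends to $Y$, giving $[\Aut(X/k_0):\Aut(Y/k_0)]<\infty$. Your single-step kernel $G_0$ does not accomplish this.

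For part (2) there is no proof at all: you explicitly defer the construction (``the main obstacle is exactly this construction''), and that construction is the heart of the paper. The paper takes $X={\rm Km}(E\times F)$ with $E: y^2=x(x-1)(x-t)$ and $F$ supersingular (hence not isogenous to $E$, so $\rho(X)=18$ and $X$ is not supersingular), proves by hand --- without Torelli, which is unavailable over $k$ --- that every automorphism commutes with the involution $\theta$ and preserves the configuration of $24$ visible rational curves; it then produces two elliptic fibrations from divisors of type $I_8$ and $IV^*$ whose Mordell--Weil translations $f_1,f_2$ act on the rational curve $C$ (image of $E$) by $x\mapsto tx$ and $x\mapsto x+1$, so that the conjugates $f_1^n\circ f_2\circ f_1^{-n}$ act by $x\mapsto x+t^n$; since $t$ is transcendental over $\F_p$, the subgroup $\langle t^n\mid n\in\Z\rangle$ of $(k,+)$ is not finitely generated, and Jang's theorem on finiteness of the canonical representation for non-supersingular K3 surfaces in odd characteristic supplies the finite-index comparisons needed to transport non-finite generation to $\Aut(Y/k)$ for $Y$ the double blow-up at $P$ and $Q\in E_P$. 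Your proposed mechanism also mischaracterizes the source of non-finite generation: it does not come from infinitely many negative curves or a failure of the cone conjecture, but from an abelian group of translations inside $(k,+)$ detected through the fixed-point/differential structure at $P$.
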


Theorem \ref{thm1} (1) is a special case of a slightly more general result on finite generation of the discrete automorphism group ${\rm Aut}(Y)/{\rm Aut}^0(Y)$ of a smooth projective surface $Y$ defined over $k_0$ (Theorem \ref{thm22} (3)). Theorem \ref{thm22} (3) is of its own interest and gives an answer to a question by the referee.  

\begin{corollary}\label{cor1}
Let $k$ be the base field. Then, for any integer $d$ such that $d \ge 2$, there is a smooth projective variety $Y_d$ of $\dim\, Y_d = d$ such that ${\rm Aut}(Y_d/k)$ is discrete and not finitely generated.
\end{corollary}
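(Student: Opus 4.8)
The plan is to reduce the statement to Theorem \ref{thm1}(2) by taking products with a rigid higher-dimensional factor. Let $Y$ be the smooth projective surface over $k$, birational to a K3 surface, furnished by Theorem \ref{thm1}(2), so that $\Aut(Y/k)$ is not finitely generated. Since $Y$ is birational to a K3 surface, every plurigenus $P_m(Y)$ equals $1$ and $H^0(Y, T_Y) = 0$; in particular $\Aut(Y/k)$ is discrete, its identity component being trivial. For $d = 2$ I would simply take $Y_2 = Y$, so that the assertion is exactly Theorem \ref{thm1}(2) together with this discreteness. For $d \geq 3$ I would choose a smooth projective variety $B$ over $k$ with $\dim B = d-2$, of general type with ample canonical bundle and with finite automorphism group---for instance a sufficiently general smooth hypersurface of large degree in $\P^{d-1}$, whose existence over any algebraically closed field is classical---and set $Y_d := Y \times_k B$.

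Next I would identify $\Aut(Y_d/k)$. Because plurigenera are multiplicative on products and equal $1$ on the factor $Y$, one has $P_m(Y_d) = P_m(B)$, and for $m \gg 0$ the $m$-canonical map of $Y_d$ is the second projection $\mathrm{pr}_2 \colon Y_d \to B$ followed by a closed embedding of $B$ (using that $B$ is canonically polarized, hence its own canonical model). Since pluricanonical maps are preserved by automorphisms, every $\varphi \in \Aut(Y_d/k)$ descends to an automorphism of $B$, yielding a homomorphism $\rho \colon \Aut(Y_d/k) \to \Aut(B/k)$. An element of $\ker \rho$ preserves each fibre $Y \times \{b\}$ and therefore determines a morphism $b \mapsto \varphi_b \in \Aut(Y)$ from the connected variety $B$ to the discrete group $\Aut(Y/k)$; such a morphism is constant, so $\ker \rho \cong \Aut(Y/k)$ acting diagonally. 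Hence there is an exact sequence $1 \to \Aut(Y/k) \to \Aut(Y_d/k) \xrightarrow{\rho} \Aut(B/k)$ whose image lies in the finite group $\Aut(B/k)$.

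Finally I would draw the two conclusions. Since $\im \rho$ is contained in the finite group $\Aut(B/k)$, the subgroup $\Aut(Y/k)$ has finite index in $\Aut(Y_d/k)$; as a finite-index subgroup of a finitely generated group is itself finitely generated, the non-finite-generation of $\Aut(Y/k)$ forces $\Aut(Y_d/k)$ to be non-finitely generated as well. For discreteness, the Künneth decomposition $T_{Y_d} \cong \mathrm{pr}_1^* T_Y \oplus \mathrm{pr}_2^* T_B$ gives $H^0(Y_d, T_{Y_d}) = H^0(Y, T_Y) \oplus H^0(B, T_B) = 0$, the second summand vanishing because $K_B$ is ample; thus $\Aut(Y_d/k)$ has trivial identity component and is discrete.

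I expect the main obstacle to be the middle step: ruling out automorphisms of $Y_d$ that mix the two factors. The pluricanonical argument handles this cleanly precisely because the two factors have different Kodaira dimensions ($0$ versus $d-2$), so that the projection to $B$ is intrinsic; the remaining care is to ensure, in positive characteristic, that $\Aut(B/k)$ is genuinely finite as a group of $k$-points and that the morphism from $B$ to the discrete automorphism scheme of $Y$ is constant, for which the rigidity of a general hypersurface of general type and the vanishing $H^0(B, T_B) = 0$ are the key inputs.
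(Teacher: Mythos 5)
Your proposal has the same skeleton as the paper's proof (Theorem \ref{thm41}): multiply the surface $Y$ of Theorem \ref{thm1} (2) by a rigid factor $B$ with finite automorphism group, use the pluricanonical map of the product (which is the projection to $B$, since all plurigenera of $Y$ equal $1$) to obtain $\rho \colon \Aut(Y_d/k) \to \Aut(B/k)$, identify $\ker\rho$ with $\Aut(Y/k)$ by constancy of a morphism from the connected variety $B$ to the discrete automorphism scheme of $Y$, and finish with the finite-index criterion (Theorem \ref{thm21}). Those steps are sound, and for $d=3$ your $B$ is a plane curve of large degree, i.e.\ a curve of genus at least $2$, in which case your argument is complete and coincides with the paper's.

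The gap is in the choice of $B$ for $d \ge 4$ and in the justification of discreteness. You assert $H^0(B, T_B) = 0$ ``because $K_B$ is ample.'' That implication is a characteristic-zero fact; in characteristic $p$ the usual proof only shows that the \emph{reduced} automorphism group is finite, and says nothing about infinitesimal automorphisms, because the automorphism scheme can be a non-reduced finite group scheme (with tangent space $H^0(T_B) \ne 0$) without containing $\G_a$ or $\G_m$. Indeed, Lang \cite{La83} constructed smooth projective surfaces of general type over $k$ carrying nonzero global regular vector fields, and the paper cites exactly this in Remark \ref{rem41} as a warning that a higher-dimensional general-type factor cannot be used naively --- this is precisely why the paper does not take the route you propose. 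Without $H^0(B,T_B)=0$, the K\"unneth decomposition gives $H^0(Y_d, T_{Y_d}) \supseteq H^0(B, T_B) \neq 0$, so discreteness of $\Aut(Y_d/k)$ --- half of the statement of the Corollary --- is lost. Your closing paragraph flags this but only re-asserts the needed vanishing for a ``sufficiently general smooth hypersurface of large degree''; even if that vanishing is true, it requires a genuine positive-characteristic argument (it does not follow from ampleness of $K_B$, nor by semicontinuity without first exhibiting one such hypersurface with no vector fields), and none is given. The paper's fix is cheap and you could adopt it verbatim: take $B = C_{g_1}\times\cdots\times C_{g_{d-2}}$, a product of smooth curves of pairwise distinct genera $g_i \ge 2$. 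For curves the vanishing $H^0(C_{g_i}, T_{C_{g_i}})=0$ holds in \emph{any} characteristic for degree reasons, $\deg T_{C_{g_i}} = 2-2g_i<0$ (Lemma \ref{lem41}); finiteness of each $\Aut(C_{g_i}/k)$ is elementary (Lemma \ref{lem43}); and the distinctness of the genera makes the projections intrinsic, so $\Aut(B/k)$ is the finite product $\prod_i \Aut(C_{g_i}/k)$ (Lemma \ref{lem42}). With this replacement your argument goes through and becomes the paper's proof.
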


\begin{remark}\label{rem1} Let $K$ be an algebraically closed field. 
\begin{enumerate}
\item Let $V$ be a projective variety defined over $K$. 
Then the group ${\rm Aut}\, (V/K)$ has a natural scheme structure as a locally noetherian subscheme of the Hilbert scheme ${\rm Hilb}\, (V \times V)$ under the identification of an automorphism with its graph. We denote by ${\rm Aut}^0(V/K)$ the connected component containing $\id_V$. We say that ${\rm Aut}\, (V/K)$ is discrete if ${\rm Aut}^0(V/K)$ is reduced and ${\rm Aut}^0(V/K) = \{\id_V\}$. If $V$ is smooth, then $H^0(V, T_V)$ is the Zariski tangent space of ${\rm Aut}\, (V/K)$ at $\id_V$, therefore, ${\rm Aut}\, (V/K)$ is discrete if and only if $H^0(V, T_V) = 0$. 
\item  Let $S$ be a K3 surface defined over $K$, that is, a smooth projective surface defined over $K$ with $h^1(S,\sO_S) = 0$ and with a nowhere vanishing global regular $2$-form. Then $S_L$ is also a K3 surface over $L$ for any field extension $K \subset L$. Recall that $H^0(S, T_S) = 0$ also in positive characteristic by \cite[Theorem 7]{RS76}. Therefore, ${\rm Aut}\, (S/K)$ is discrete, i.e., ${\rm Aut}^0(S/K) = \{\id_S\}$. Recall also that ${\rm Aut}\, (S/K) = {\rm Bir}\, (S/K)$ by the minimality of the surface $S$. If we have a birational morphism $\tau : T \to S$ from a smooth projective surface $T$, then we have an inclusion $H^0(T, T_T) \subset H^0(S, T_S)$ via $\tau$ and therefore ${\rm Aut}\, (T/K)$ is also discrete as well. Moreover, ${\rm Aut}\, (T/K)$ can be regarded as a subgroup of ${\rm Aut}\, (S/K) = {\rm Bir}\, (S/K)$ via $\tau$ as follows: 
$${\rm Aut}\, (T/K) \subset {\rm Aut}\, (S/K)\,\, ;\,\, f \mapsto \tau \circ f \circ \tau^{-1}\,\, .$$
\end{enumerate}
\end{remark}

This work is much inspired by recent two remarkable works, due to Lesieutre \cite{Le17} in which a $6$-dimensional example as in Theorem \ref{thm1}(2), also over characteristic $2$, is constructed, and due to Dinh and me \cite{DO19} in which a complex surface example as in Theorem \ref{thm1}(2) is finally constructed. 

Let $S$ be a K3 surface defined over an algebraically closed field $K$. Sterk \cite{St85} shows the finite generation of ${\rm Aut}\, (S/K)$ when $K$ is of characteristic zero by using the Torelli theorem for complex K3 surfaces (See also Lemma \ref{lem21}). Then Lieblich and Maulik \cite[Thorem 6.1 and its proof]{LM18} shows the finite generation of ${\rm Aut}\, (S/K)$ when $K$ is of odd characteristic as Theorem \ref{thm2} below. They reduce to characteristic zero when $S$ is not supersingular (Theorem \ref{thm2} (2)), while they use the crystalline Torelli theorem, which is not yet settled in characteristic $2$, when $S$ is supersngular.

\begin{theorem}\label{thm2} 
Let $S$ be a K3 surface defined over an algebraically closed field $K$ of odd characteristic. Then 
\begin{enumerate}
\item ${\rm Aut}\, (S/K)$ is finitely generated.
\item Assume in addition that $S$ is not supersingular. Then there are a discrete valuation ring $R$ with residue field is $K$ and fraction field $Q(R)$ of characteristic $0$ and a smooth projective morphism $\pi : X \to {\rm Spec}\, R$ with special fiber $S$ such that the specialization map 
$${\rm Aut}\, (\tilde{S}/\tilde{K}) \to {\rm Aut}\, (S/K)$$ 
has finite kernel and cokernel. Here $\tilde{S}$ is the geometric generic fiber of $\pi$ and $\tilde{K}$ is an algebraic closure of the fractional field $Q(R)$, in particular, $\tilde{S}$ is a K3 surface defined over an algebraically closed field $\tilde{K}$ of charcateristic zero. 
\end{enumerate}
\end{theorem}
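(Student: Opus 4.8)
The plan is to reduce both assertions to lattice theory via Torelli-type theorems together with a cone conjecture of Morrison--Kawamata type for K3 surfaces. Throughout I write $L = {\rm NS}(S) = \Pic(S)$, an even lattice of signature $(1,\rho-1)$ with $\rho$ the Picard number; in odd characteristic this is torsion free and carries the intersection form, and $\Aut(S/K)$ acts on $L$ by isometries, giving a homomorphism $\Aut(S/K) \to O(L)$. Two standing facts will be used repeatedly. First, this homomorphism has finite kernel: an automorphism acting trivially on $L$ acts on $\ell$-adic (or crystalline) cohomology through a finite group, and since $H^0(S, T_S) = 0$ it cannot move in a positive-dimensional family, so the kernel is finite. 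Second, inside the positive cone $C^+ \subset L_{\R}$ the reflections in the $(-2)$-classes generate a Weyl group $W$ whose chambers are cut out by the hyperplanes $\delta^\perp$ (with $\delta^2 = -2$); the nef cone of $S$ is exactly the unique such chamber containing the ample classes, because on a K3 every $(-2)$-class is, up to sign, effective by Riemann--Roch.

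I would first prove (2), since it feeds back into (1). Because $S$ is not supersingular we have $\rho \le 20$, so the formal deformation space of $S$, smooth of dimension $20$, contains a formal subgerm along which every class of $L$ stays algebraic: each class imposes one linear condition on the tangent space through the crystalline period, leaving a germ of dimension $20 - \rho \ge 0$, whose generic point has Picard number exactly $\rho$. Polarizing by the given ample class and invoking Grothendieck existence, this formal lift algebraizes to a smooth projective $\pi : X \to \Spec R$ over a mixed-characteristic complete discrete valuation ring $R$ with residue field $K$, and by construction the specialization homomorphism ${\rm NS}(\tilde S) \to {\rm NS}(S) = L$ on geometric fibers is an isomorphism of lattices. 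This is the technical heart of (2), and it is where non-supersingularity is essential.

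Given the family, I would define the specialization map $\Aut(\tilde S/\tilde K) \to \Aut(S/K)$ by taking the flat limit of the graph of an automorphism inside $X \times_R X$ and recognizing the special fiber as the graph of an automorphism of $S$, and then control its kernel and cokernel through $O(L)$. The relatively ample class $h$ is ample on both fibers, so under the lattice isomorphism the two nef cones coincide, both being the $W$-chamber of $h$; hence $\Aut(\tilde S)$ and $\Aut(S)$ share the same target chamber in $O(L)$. An element specializing to $\id$ acts trivially on $L$, so the kernel lies in the finite kernel of $\Aut(\tilde S) \to O(L)$. For the cokernel, the Hodge-theoretic Torelli theorem for the complex K3 surface $\tilde S$ shows that a finite-index subgroup of the chamber stabilizer $O(L)_{{\rm Nef}}$, namely the isometries that additionally respect the transcendental Hodge structure, lifts to $\Aut(\tilde S)$ and therefore specializes into $\Aut(S)$; this is a finite-index condition because $\Aut$ acts on the transcendental lattice through a finite cyclic group. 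Since $\Aut(S)$ itself embeds into $O(L)_{{\rm Nef}}$ with finite kernel, the image of specialization has finite index, which gives finite kernel and cokernel and proves (2).

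Finally (1). If $S$ is not supersingular, part (2) exhibits $\Aut(S/K)$ as the target of a homomorphism from $\Aut(\tilde S/\tilde K)$ with finite kernel and cokernel; the latter is finitely generated by Sterk's theorem \cite{St85} in characteristic zero, and finite generation is inherited through a map with finite cokernel, so $\Aut(S/K)$ is finitely generated. If $S$ is supersingular we cannot lift while preserving $L$, since now $\rho = 22$, and instead I would invoke Ogus's crystalline Torelli theorem, valid in odd characteristic, which identifies the image of $\Aut(S/K)$ in $O(L)$, up to finite index, with the subgroup of the nef-chamber stabilizer compatible with the supersingular K3 crystal. With this lattice-theoretic description in hand, Sterk's argument, that the relevant arithmetic group acts on the nef cone with a rational polyhedral fundamental domain so that finitely many face-adjacent elements generate the group, runs essentially verbatim and yields finite generation. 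The main obstacle is precisely this replacement of the complex Torelli theorem by the crystalline one in the supersingular case: it is what forces the hypothesis that $p$ be odd, since the crystalline Torelli theorem is not available in characteristic $2$.
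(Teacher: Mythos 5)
The paper does not give its own proof of this theorem: it is quoted directly from Lieblich--Maulik \cite[Theorem 6.1 and its proof]{LM18}, whose argument is exactly the one you outline --- lifting a non-supersingular K3 to characteristic zero so that specialization is an isomorphism on Picard lattices, comparing automorphism groups through the nef chamber in $O({\rm NS})$ via the Torelli theorem and Sterk's result \cite{St85}, and invoking Ogus's crystalline Torelli theorem in the supersingular case. Your proposal is therefore a correct reconstruction, in outline, of the same approach as the cited proof.
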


We prove Theorem \ref{thm1} (1) as an application of Theorem \ref{thm22} 
in Section \ref{sect2}. 

Our proof of Theorem \ref{thm1} (2) is quite close to \cite{DO19}. As in \cite{DO19}, we explicitly construct a desired surface $Y$ from some special Kummer K3 surface $X$ in odd characteristic. In Section \ref{sect3}, we define this surface $X$ and prove Theorem \ref{thm1} (2) by studying the surface $X$ and its suitable blow-up. Complex surfaces similar to $X$ are fully studied in \cite{Og89} and effectively applied in \cite{DO19}. However, some arguments there are based on the global Torelli theorem for complex K3 surfaces (see eg. \cite[Chapter VIII]{BHPV04}) which is not available over $k$. We also use a result due to Jang \cite[Proposition 3.5]{Ja13} on the finiteness of canonical representation of any non-supersingular K3 surface defined over any algebraically closed field of odd characteristic (Theorem \ref{thm32}). This substitutes the finiteness of canonical representation in characteristic $0$ (\cite[Theorem 14.10]{Ue75}) used in \cite{DO19}. 

We prove Theorem \ref{thm1} (2) in Section \ref{sect3} and Corollary \ref{cor1} in Section \ref{sect4}. 

Throughout this paper, for a variety $V$ defined over a field $K$ and for closed subsets $W_1$, $W_2$, $\ldots$, $W_n$ of $V$, we denote
$${\rm Aut}\, (V/K, W_1, W_2, \ldots, W_n) := \{ f \in {\rm Aut}\, (V/K)\, |\, f(W_i) = W_i\,\, (\forall i)\,\, \}\,\, .$$ 

\medskip\noindent
{\bf Acknowledgements.} I would like to thank Professors Tien-Cuong Dinh, Igor Dolgachev, Jun-Muk Hwang, H\'el\`ene Esnault, Yuya Matsumoto, Junichiro Noguchi, Takeshi Saito for valuable discussion and help. Especially, I would like to express my thanks to Professor Tien-Cuong Dinh for sharing many ideas since our previous joint work \cite{DO19} and his warm encouragement, Professor Jun-Muk Hwang for his invitation to one day workshop at KIAS which was very helpful to make the final version of this paper and Professor H\'el\`ene Esnault and the referee for many valuable comments most of which are effectively reflected in this paper.

%%%%%%%%
\section{Proof of Theorem \ref{thm1} (1)}\label{sect2}

Our main result of this section is Theorem \ref{thm22} (3). We then deduce Theorem \ref{thm1} (1) as an application of Theorem \ref{thm22} (3) and Lemma \ref{lem21} below. 

As in \cite{DO19}, the following theorem will be frequently used in this paper.

\begin{thm}\label{thm21}
Let $G$ be a group and $H \subset G$ a subgroup of $G$. Assume that $H$ is of finite index, i.e., $[G :H] < \infty$. Then, the group $H$
 is finitely generated if and only if $G$ is finitely generated.  
\end{thm}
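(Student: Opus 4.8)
The plan is to prove the two implications separately; only the direction ``$G$ finitely generated $\Rightarrow$ $H$ finitely generated'' is substantial, the other being elementary and not even requiring the full strength of the index hypothesis.

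First I would dispose of the easy implication. Suppose $H = \langle h_1, \ldots, h_m \rangle$. Since $[G:H] = n < \infty$, choose coset representatives $g_1 = e, g_2, \ldots, g_n$ with $G = \bigsqcup_{i=1}^n g_i H$. Any $g \in G$ lies in a unique $g_i H$, so $g = g_i h$ with $h \in H$ a word in the $h_j$; hence $G = \langle h_1, \ldots, h_m, g_1, \ldots, g_n \rangle$ is finitely generated.

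For the converse --- the crux, which is the Reidemeister--Schreier theorem --- I would argue as follows. Write $G = \langle S \rangle$ with $S$ finite, enlarging $S$ so that $S = S^{-1}$. Fix a right transversal $T = \{t_1 = e, \ldots, t_n\}$ for $H$ in $G$, so $G = \bigsqcup_{i=1}^n H t_i$; this set is finite precisely because $[G:H] = n$. For $g \in G$ let $\overline{g} \in T$ be the representative of the coset $Hg$, characterized by $g\overline{g}^{-1} \in H$. Introduce the Schreier generators $\gamma(t,s) := t s\, \overline{ts}^{-1}$ for $t \in T$, $s \in S$; each lies in $H$ because $Hts = H\overline{ts}$, and there are at most $|T|\cdot|S| < \infty$ of them.

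The key claim is that these finitely many elements generate $H$, which I would establish by a telescoping rewriting. Given $h \in H$, write $h = s_1 \cdots s_k$ with $s_i \in S$, set $t_0 = e$, and inductively put $t_i = \overline{t_{i-1} s_i}$. Then a direct computation gives
$$ h \;=\; \Big(\prod_{i=1}^{k} t_{i-1} s_i\, t_i^{-1}\Big)\, t_k \;=\; \Big(\prod_{i=1}^{k} \gamma(t_{i-1}, s_i)\Big)\, t_k, $$
since consecutive factors telescope and $t_i = \overline{t_{i-1}s_i}$. Because $H t_k = H t_{k-1} s_k = \cdots = H s_1 \cdots s_k = Hh = H$ while $t_k \in T$ and $e \in T$, uniqueness of representatives forces $t_k = e$, so $h$ is a product of Schreier generators. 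The main obstacle is exactly this rewriting step: one has to track the running coset representatives $t_i$ carefully so that every factor of the telescoped product is genuinely one of the finitely many $\gamma(t,s)$ and so that the tail $t_k$ collapses to the identity. Once that bookkeeping is in place, the finite generation of $H$ follows, completing the proof.
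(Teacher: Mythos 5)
Your proof is correct and is exactly the approach the paper takes: the paper's ``if part'' is disposed of by citing Reidemeister's method (rewriting a word in generators of $G$ via a finite transversal to produce the Schreier generators $t s\,\overline{ts}^{-1}$ of $H$), with the details outsourced to Suzuki's book, and your telescoping argument is precisely that method carried out in full (your use of right cosets rather than the paper's left cosets is immaterial). One cosmetic quibble: your remark that the easy direction ``does not require the full strength of the index hypothesis'' is misleading, since your own argument for it uses finitely many coset representatives, i.e.\ exactly $[G:H]<\infty$ --- without which that direction is false (take $H$ trivial inside an infinitely generated $G$).
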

\begin{proof} "Only if part" is clear. "If part" follows from a standard method finding a set of generators of $H$ from a given set of generators of $G$ and complete representatives of the left coset $G/H$, called Reidemeister's method. See e.g. \cite[Page 181, Corollary 1]{Su82} for a self-contained proof.
\end{proof}

The following lemma is implicitly used in several papers. Our argument here is much inspired by a paper of Professor J\'anos Koll\'ar \cite[Proof of Theorem 6]{Ko09}:
\begin{lemma}\label{lem21}
Let $K$ be an algebraically closed field and let $V$ be a projective variety defined over $K$. Assume that ${\rm Aut}\, (V/K)$ is discrete. Then, ${\rm Aut}\, (V/K) = {\rm Aut}\, (V_{L}/L)$, as groups, for any field extension $K \subset L$, under the natural inclusion ${\rm Aut}\, (V/K) \subset {\rm Aut}\, (V_{L}/L)$.  
\end{lemma}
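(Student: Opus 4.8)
The plan is to argue with the automorphism \emph{scheme}, not merely its group of points. Write $\mathcal{A} := \mathrm{Aut}(V/K)$ for the group scheme of Remark \ref{rem1}(1); it is locally of finite type over $K$, being an open subscheme of $\mathrm{Hilb}(V\times V)$, which for the projective $V$ is a disjoint union of projective (hence finite-type) pieces. By compatibility of the Hilbert scheme with base change, its functor of points gives $\mathcal{A}(K) = \mathrm{Aut}(V/K)$ and $\mathcal{A}(L) = \mathrm{Aut}(V_L/L)$ for every field extension $K \subset L$, and the inclusion in the statement is the natural map $\mathcal{A}(K) \to \mathcal{A}(L)$ induced by $K \hookrightarrow L$. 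This map is automatically injective, so the whole content is its surjectivity: every $L$-automorphism of $V_L$ is already defined over $K$.

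First I would record the homogeneity consequence of discreteness. By hypothesis $\mathcal{A}^0 = \mathrm{Aut}^0(V/K) = \{\mathrm{id}_V\}$ is a reduced point, i.e. $\mathcal{A}^0 = \mathrm{Spec}\, K$ as a closed subscheme of $\mathcal{A}$. For any $g \in \mathcal{A}(K) = \mathrm{Aut}(V/K)$, left translation $\ell_g$ is a $K$-automorphism of the scheme $\mathcal{A}$ sending $\mathrm{id}_V$ to $g$; hence it carries the connected component $\mathcal{A}^0$ isomorphically onto the connected component of $\mathcal{A}$ through $g$. Thus that component is again a single reduced $K$-point $\{g\} \cong \mathrm{Spec}\, K$. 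So \emph{every connected component of $\mathcal{A}$ that contains a $K$-point is isomorphic to $\mathrm{Spec}\, K$}.

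Next, given $f \in \mathrm{Aut}(V_L/L)$, I would spread it out: since $V$ is of finite type over $K$ and $f$ is cut out by finitely many equations with coefficients in $L$, there is a finitely generated $K$-subalgebra $A \subset L$, which we may take to be a domain with $\mathrm{Spec}\, A$ an integral affine $K$-variety, together with an $A$-automorphism $F$ of $V_A := V \times_K \mathrm{Spec}\, A$ whose pullback along $\mathrm{Spec}\, L \to \mathrm{Spec}\, A$ is $f$. The family $F$ is classified by a $K$-morphism $\varphi : \mathrm{Spec}\, A \to \mathcal{A}$. As $\mathrm{Spec}\, A$ is connected, its image is connected and hence lies in a single connected component $C$ of $\mathcal{A}$. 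Since $K$ is algebraically closed and $A$ is a nonzero finitely generated $K$-algebra, $\mathrm{Spec}\, A$ has a $K$-point $a$, and $\varphi(a) = F_a \in \mathcal{A}(K)$ lies in $C$; by the previous paragraph $C = \{F_a\} \cong \mathrm{Spec}\, K$.

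Therefore $\varphi$ factors through the closed point $\{F_a\} = \mathrm{Spec}\, K \hookrightarrow \mathcal{A}$, so it is the constant morphism at $F_a$; equivalently $F$ is the base change to $A$ of the single automorphism $F_a \in \mathrm{Aut}(V/K)$. Restricting to the generic point and then to $L$ shows $f = (F_a)_L$ lies in the image of $\mathrm{Aut}(V/K)$, giving surjectivity and hence the lemma. The one genuinely delicate point is the spreading-out step together with the moduli identification $\mathcal{A}(L) = \mathrm{Aut}(V_L/L)$: one must check that an automorphism of $V_L$ really extends to an $A$-automorphism of $V_A$ for some finitely generated $A$ and that this extension is the family classified by $\varphi$. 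Everything after that is formal once homogeneity collapses the relevant component to a reduced $K$-point. (Alternatively one could invoke the structure theory of group schemes locally of finite type over a field to see directly that $\mathcal{A}$ is the constant group scheme on $\mathrm{Aut}(V/K)$, whence $\mathcal{A}(L) = \mathcal{A}(K)$ at once; the argument above is an elementary substitute.)
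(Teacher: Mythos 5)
Your proof is correct, and it reaches the paper's conclusion by a noticeably different mechanism, though both arguments live on the same object: the automorphism scheme $\mathcal{A} = \mathrm{Aut}(V/K)$ inside $\mathrm{Hilb}(V \times V)$, together with base-change compatibility $\mathcal{A}(L) = \mathrm{Aut}(V_L/L)$. The paper reasons by contradiction and specialization: an automorphism $\varphi$ of $V_L$ not defined over $K$ corresponds (since $K$ is algebraically closed) to a point of the $K$-scheme $\mathcal{A}$ whose residue field is transcendental over $K$, and the closure of such a non-closed point is a positive-dimensional subset of $\mathcal{A}$, contradicting discreteness. You instead argue forwards: spread $f$ out to an automorphism over a finitely generated subalgebra $A \subset L$, note that the classifying morphism $\mathrm{Spec}\, A \to \mathcal{A}$ has connected source and hits a $K$-point (Nullstellensatz), and conclude it is constant because translation-homogeneity collapses every component of $\mathcal{A}$ through a $K$-point to a reduced point. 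The two routes are logically close --- your constancy statement is essentially the contrapositive of the paper's ``specialization gives a positive dimensional subset'' --- but yours has two advantages: it makes that informal specialization step precise via the moduli interpretation and spreading out, and it records explicitly the homogeneity argument needed to pass from the definition of discreteness (which concerns only $\mathrm{Aut}^0$) to the statement that the relevant components are reduced points, something the paper's three-sentence proof leaves implicit. As a byproduct, your argument shows that $\mathcal{A}$ is the constant group scheme on the abstract group $\mathrm{Aut}(V/K)$, which is the cleanest form of the lemma; the price is length compared with the paper's terse original.
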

\begin{proof} Let $\varphi \in {\rm Aut}\, (V_{L}/L) \setminus {\rm Aut}\, (V/K)$. Since $K$ is algebraicaly closed and $\varphi \not\in {\rm Aut}\, (V/K)$, the residue field of the point 
$$[\varphi] : {\rm Spec}\, L \to {\rm Aut}\, (X_L/L) \subset {\rm Hilb}\, (V_L \times V_L)$$ corresponding to the graph of $\varphi$ is transcendental over $K$. However, then, the specialization gives a positive dimensional subset of ${\rm Aut}\, (V/K) \subset {\rm Hilb}\, (V \times V)$, a contradiction to our assumption that ${\rm Aut}\, (V/K)$ is discrete. This implies the result.
\end{proof}

\begin{remark}\label{rem21}
Needless to say, ${\rm Aut}\, (V_{L}/L)$ is much bigger than ${\rm Aut}\, (V/K)$ in general. For instance, for an elliptic curve $E$ defined over $\overline{\Q}$, the group ${\rm Aut}\, (E_{\C}/\C)$ is uncountable, while ${\rm Aut}\, (E/\overline{\Q})$ is countable. 
\end{remark}

We denote by $\kappa (X)$ the Kodaira dimension of a smooth projective variety $X$. 

\begin{theorem}\label{thm22}
Let $K$ be an algebraically closed field.
\begin{enumerate}
\item Let $X$ be a variety defined over $K$. We assume that $X$ is a smooth projective surface such that either $\kappa (X) \ge 1$ or the image of the albanese morphism $X \to {\rm Alb}(X)$ is a curve, or $X$ is an abelian variety. Then the group ${\Aut}(X/K)/{\rm Aut}^0(X/K)$ is finitely generated. In particular, the automorphism group ${\rm Aut}_{{\rm group}}(X/K)$ of an abelian variety $X$ as a group variety is finitely generated.
 
\item Let $X$ be a smooth minimal projective surface defined over $K$. Assume that $\kappa (X) = 0$ and $K$ is of odd characteristic. Then the group ${\Aut}(X/K)/{\rm Aut}^0(X/K)$ is finitely generated.

\item Let $Y$ be a smooth projective surface defined over $k_0$, an algebraic closure of the prime field $\F_p$ of odd characteristic. Then, the group ${\Aut}(Y/k_0)/{\rm Aut}^0(Y/k_0)$ is finitely generated unless $Y$ is a rational surface.
\end{enumerate}
\end{theorem}

\begin{proof} The assertion (1) should be known for the experts. We give a proof for the convenience of the readers. Let 
$$\tau : {\rm Aut}(X/K)/{\rm Aut}^0(X/K) \to {O}({\rm NS}(X))/({\rm torsion})\,\, ;\,\, f \mapsto f^*|_{{\rm NS}(X)/({\rm torsion})}$$
be the natural contravariant group homomorphism. 
Then ${\rm Ker}\,(\tau)$ is a finite group by \cite[Theorem 2.10]{Br19}. Thus, it suffices to show that the group ${\rm Im}\, (\tau)$ is finitely generated. 

First consider the case where $X$ is a surface. 

Assume that $\kappa (X) \ge 1$. Let $mK_X = P + N$ be the Zariski decomposition of $mK_X$ where $m$ is a sufficiently divisible positive integer. Then $P \in {\rm NS}(X)/({\rm torsion})$. We have $(P^2) > 0$ when $\kappa (X) =2$ and $P \not= 0$ and $(P^2) = 0$ when $\kappa (X) =1$. Since the class $P$ is preserved by ${\rm Im}\,(\tau)$, it follows from the Hodge index theorem that ${\rm Im}\,(\tau)$ is finite when $\kappa (X) =2$ and ${\rm Im}\,(\tau)$ is a finitely generated abelian group up to finite kernel and cokernel when $\kappa (X) =1$ (See eg. \cite[Theorem 2.1]{Og07}). Hence the group ${\rm Im}\,(\tau)$ is finitely generated. 

Assume that the image of the albanese morphism $X \to {\rm Alb}(X)$ is a curve. Then the class of general fiber $F \in {\rm NS}(S)/({\rm torsion})$ satisfies $F \not= 0$ and $(F^2) = 0$ and is preserved by ${\rm Im}\,(\tau)$. Hence for the same reason as in $\kappa (X) =1$, the group ${\rm Im}\, (\tau)$ is a finitely generated abelian group up to finite kernel and cokernel, in particular, finitely generated. 

Next, consider the case where $X$ is an abelian variety. Let $O$ be the origin of the group $X$. Then ${\Aut}(X/K)/{\rm Aut}^0(X/K)$ is isomorphic to ${\Aut}(X/K, O) = {\rm Aut}_{{\rm group}}(X/K)$, which is an arithmetic subgroup of the real linear algebraic group $({\rm End}^0\,(X) \otimes \R)^{\times}$ defined over $\Q$ (See eg. \cite[Corollary 3.6]{PS12}). Hence the group ${\Aut}(X/K)/{\rm Aut}^0(X/K)$ is finitely generated by \cite[Theorem 6.2]{BH62}. 

This completes the proof of the assertion (1).

Let us show the assertion (2). By (1) and by the classification of smooth projective surfaces (\cite{BM77}), we may assume that $X$ is either a K3 surface or an Enriques surface. Since $K$ is of odd characteristic, the result follows from Theorem \ref{thm2} (the main result of \cite{LM18}) for K3 surfaces and \cite[Theorem 1.3]{Wa19} for Enriques surfaces. This completes the proof of the assertion (2). 

We show the assertion (3). Note that $Y$ is not a rational surface by our assumption. Then, by (1) and (2) and by the classification of smooth projective surfaces (\cite{BM77}), we may assume that $Y$ is not minimal and $Y$ is birational to either a K3 surface, an Enriques surface, or an abelian surface.

Let $X$ be the minimal model of $Y$. Then the surface $X$ is unique up to isomorphism and we have a birational morphism 
$$\pi = \pi_n \circ \pi_{n-1} \circ \ldots \circ \pi_0 : Y := X_{n+1} \to X_{n} \to \ldots \to X_1 \to X_0 := X\,\, ,$$
where $\pi_{i} : X_{i+1} \to X_i$ is the blow-up at some point $P_i \in X_i(k_0)$. For the same reason as in Remark \ref{rem1} (2), ${\rm Aut}\, (Y/k_0) \subset {\rm Aut}\, (X/k_0)$ via $\pi$. Let $E_{\pi} \subset Y$ be the exceptional set of $\pi$. Since $Y$ is not minimal, $\pi(E_{\pi})$ is a non-empty finite set of points, hence $P_0 \in \pi(E_{\pi})$, and the group ${\rm Aut}\, (Y/k_0)$ preserves $\pi(E_{\pi})$ via $\pi$. Then, 
$$H := {\rm Aut}\, (Y/k_0, \pi^{-1}(P_0))$$
is a finite index subgroup of ${\rm Aut}\, (Y/k_0)$ such that $H \subset {\rm Aut}\, (X/k_0, P_0)$ via $\pi$. 

We are going to show that $H$ is finitely generated. First we observe the following:

\begin{lemma}\label{lem22} ${\rm Aut}\, (X/k_0, P_0)$ is a finitely generated group.
\end{lemma}
\begin{proof} The result follows from Theorem \ref{thm22} (1) when $X$ is an abelian surface. Indeed, we may take $P_0$ as the origin of $X$. 

Assume that $X$ is a K3 surface or an Enriques surface. Then ${\rm Aut}\, (X/k_0)$ is finitely generated by Theorem \ref{thm22} (2). We set 
$${\rm Aut}\, (X/k_0) = \langle h_1, \ldots , h_r \rangle\,\, .$$
Then there is a positive integer $q$, which is a power of $p$, such that $h_j$ ($1 \le j \le r$) are all defined over $\F_{q}$ and also $P_0 \in X(\F_{q})$. By definition, any $h \in {\rm Aut}(X/k_0)$ is then defined over $\F_q$. Let $T = X(\F_q)$. Then $T$ is a finite set. Since $T$ is preserved by each $h_j$, it follows that $T$ is preserved by ${\rm Aut}\, (X/k_0)$. Hence we have a group homomorphism
$$\sigma : {\rm Aut}\, (X/k_0) \to {\rm Aut}_{{\rm set}}(T)$$
and 
$${\rm Ker}\, (\sigma) \subset {\rm Aut}\, (X/k_0, P_0) \subset {\rm Aut}\, (X/k_0).$$
Then ${\rm Ker}\, (\sigma)$ is a finite index subgroup of ${\rm Aut}\, (X/k_0)$ by $|T| < \infty$. Hence ${\rm Aut}\, (X/k_0, P_0)$ is a finite index subgroup of ${\rm Aut}\, (X/k_0)$ as well. Since ${\rm Aut}\, (X/k_0)$ is finitely generated, so is ${\rm Aut}\, (X/k_0, P_0)$ by Theorem \ref{thm21}. This completes the proof of the assertion (3).
\end{proof}
By Lemma \ref{lem22}, we may set 
$${\rm Aut}\, (X/k_0, P_0) = \langle g_1, \ldots , g_m \rangle\,\, .$$
Then there is a positive integer $q$, which is a power of $p$, such that $g_j$ are all defined over $\F_{q}$ and also $P_i \in X_i(\F_{q})$ for all integers $0 \le i \le n+1$ and $1 \le j \le m$. By definition, any $g \in {\rm Aut}(X/k_0, P_0)$ is then defined over $\F_q$ and the blow-up $\pi_i$ are also defined 
over $\F_{q}$.  

Let $S = X(\F_q)$. Then $S$ is a finite set. We consider the blow-up $p_0 : Y_1 \to Y_0 := X$ at $S$ and the exceptional divisor $E_S$ of $p_0$. Here $E_S$ is a disjoint union of $|S|$ $\P^1$s. Then $Y_1$ is defined over $\F_q$ and $S_1 := E_S(\F_q)$ is a finite set. We then consider the blow-up $p_1 : Y_2 \to Y_1$ at $S_1$. Then $Y_2$ is defined over $\F_q$. We repeat this process $(n+1)$-times, where $n$ is the same positive integer $n$ as in $\pi : Y \to X$ above, and get the birational morphism
$$\varphi := p_{n} \circ p_{n-1} \circ \ldots \circ p_0 : Z := Y_{n+1} \to Y_{n} \to \ldots \to Y_1 \to Y_0 = X\,\, .$$
By the choice of $\F_q$ and by the construction of $Z$, each element of ${\rm Aut}\, (X/k_0, P_0)$ lifts to an element of ${\rm Aut}\, (Z/k_0, \varphi^{-1}(P_0))$ under $\varphi$. Thus the inclusion 
$${\rm Aut}\, (Z/k_0, \varphi^{-1}(P_0)) \subset {\rm Aut}\, (X/k_0, P_0)$$
induced by ${\rm Aut}\, (Z/k_0) \subset {\rm Aut}\, (X/k_0)$ via $\varphi$ is actually an equality, that is, 
$${\rm Aut}\, (Z/k_0, \varphi^{-1}(P_0))) = {\rm Aut}\, (X/k_0, P_0)$$ 
via $\varphi$. Let $\{E_j\}_{j=1}^{N}$ be the set of the irreducible components of the exceptional divisor of $\varphi$. By construction, the set $\{E_j\}_{j=1}^{N}$ is preserved by ${\rm Aut}\, (X/k_0, P_0)$ under the identification made above. Thus, we have a group homomorphism 
$$\rho : {\rm Aut}\, (X/k_0, P_0) \to {\rm Aut}_{{\rm set}}(\{E_i\}_{i=1}^{N}) \simeq S_N\,\, .$$
Here $S_N$ is the symmetric group of $N$ letters. Let $G = {\rm Ker}\, (\rho)$. Then 
$$[{\rm Aut}\, (X/k_0) : G]  = |{\rm Im}\, \rho | \le |S_N| = N! < \infty\,\, .$$ 
On the other hand, again by our choice of $\F_q$ and the construction of $Z$, we have the factorization $\tau : Z \to Y$ of $\varphi : Z \to X$ by $\pi : Y \to X$:
$$\varphi = \pi \circ \tau :  Z \to Y \to X\,\, .$$
Then $\tau$ is the smooth blow-down of some irreducible curves in $\{E_j\}_{j=1}^{N}$. 
By the definition, $G$ preserves each element $E_j$ of $\{E_j\}_{j=1}^{N}$. Thus any element of $G$ descends to $H = {\rm Aut}\, (Y, \pi^{-1}(P_0))$ via $\tau$. Hence we have the following group inclusions
$$G \subset H \subset {\rm Aut}\, (X/k_0, P_0)$$
via $\tau$ and $\pi$. The resulting inclusion $G \subset {\rm Aut}\, (X/k_0, P_0)$ is then the same as the one via $\varphi = \pi \circ \tau$. Thus 
$$[{\rm Aut}\, (X/k_0, P_0) : H] \le [{\rm Aut}\, (X/k_0, P_0) : G] < \infty\,\, .$$
Recall that ${\rm Aut}\, (X/k_0, P_0)$ is finitely generated (Lemma \ref{lem22}). Hence by Theorem \ref{thm21}, $H$ is finitely generated. Since $H$ is a finite index subgroup of ${\rm Aut}\, (Y/k_0)$, the group ${\rm Aut}\, (Y/k_0)$ is also finitely generated as well by Theorem \ref{thm21}. This completes the proof of the assertion (3). 
\end{proof}

We are ready to prove Theorem \ref{thm1} (1). We use the same notation as in Theorem \ref{thm1} (1). Since $Y$ is birational to a K3 surface, ${\rm Aut}\,(Y/k_0)$ is discrete (Remark \ref{rem1} (2)). Thus, by Theorem \ref{thm22} (3), ${\rm Aut}\, (Y/k_0)$ is a finitely generated group. Hence ${\rm Aut}\, (Y_L/L)$ is finitely generated as well by Lemma \ref{lem21}. This completes the proof of Theorem \ref{thm1} (1).

\section{Proof of Theorem \ref{thm1} (2)}\label{sect3}

In this section, we prove Theorem \ref{thm1}(2) by constructing $Y$ explicitly from an explicitly given Kummer K3 surface $X$ below. Our main result of this section is Theorem \ref{thm33}. As mentioned in the introduction, our construction is very close to the one in \cite{DO19}.  
 
Let $k$ be an algebraically closed field as in Introduction. Recall that$$t \in \F_p(t) \subset k$$
and $t$ is transcendental over $\F_p$. 

We finally reduce our proof of non-finite generation to the following lemma. 

\begin{lemma}\label{lem39} The subgroup $G_t := \langle t^n | n \in \Z \rangle$ of the additive group $k = (k, +)$ is not finitely generated. 
\end{lemma}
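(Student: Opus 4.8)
The plan is to exploit the positive characteristic and reduce the problem to a statement about $\F_p$-vector spaces. Since $\Char k = p$, for any integers $a_i$ and any elements $x_i \in k$ the sum $\sum_i a_i x_i$ depends only on the residues $a_i \bmod p$; consequently the subgroup of $(k,+)$ generated by a subset $T \subset k$ coincides with the $\F_p$-linear span of $T$. Applying this to $T = \{t^n : n \in \Z\}$, I would first record the identity
$$G_t = \langle t^n \mid n \in \Z \rangle = \sum_{n \in \Z} \F_p \cdot t^n \subset k\,\,,$$
so that $G_t$ is precisely the $\F_p$-subspace of $k$ spanned by all integer powers of $t$.

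Next I would use that $t$ is transcendental over $\F_p$ to show that the spanning set $\{t^n : n \in \Z\}$ is $\F_p$-linearly independent. Indeed, any nontrivial $\F_p$-linear relation among finitely many $t^n$ would, after clearing denominators by a suitable power of $t$, produce a nonzero polynomial in $t$ with coefficients in $\F_p$ vanishing at $t$, contradicting transcendence. Hence $\{t^n : n \in \Z\}$ is an $\F_p$-basis of $G_t$, and in particular $G_t$ is an \emph{infinite-dimensional} $\F_p$-vector space, so $G_t$ is an infinite set.

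Finally I would argue by contradiction: suppose $G_t$ were finitely generated as an abelian group, say by $g_1, \ldots, g_r$. Every element of $G_t$ has additive order dividing $p$, so the subgroup generated by $g_1, \ldots, g_r$ is the same as the $\F_p$-span of $g_1, \ldots, g_r$; this is a finite set, of cardinality at most $p^r$. This contradicts the infiniteness of $G_t$ established above, and the lemma follows. The argument is essentially routine once the characteristic-$p$ reduction is in place; the only point demanding care is the clean identification of the additively-generated subgroup with the $\F_p$-span, since it is this step that converts "finitely generated as a group" into "finite", which is what makes the transcendence of $t$ decisive.
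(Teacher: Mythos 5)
Your proof is correct and follows essentially the same route as the paper's: both exploit that in characteristic $p$ the group $G_t$ is an $\F_p$-vector space, that finite generation then forces finiteness (equivalently, finite dimensionality), and that transcendence of $t$ makes the powers $t^n$ linearly independent over $\F_p$. The only difference is organizational --- the paper derives a dependence relation among $1, t, \ldots, t^d$ from finite dimensionality, while you establish independence of all powers first and then contradict finiteness --- which is the same argument read in the contrapositive direction.
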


\begin{proof} If otherwise, $G_t$ would be a finitely generated abelian group with $\F_p$-vector space structure induced by the one on $k$. So $G$ has to be a finite dimensional $\F_p$-vector space, say of dimension $d$. Then the following $d+1$ elements 
$$1\,\, ,\,\, t\,\, ,\,\, t^2\,\, ,\,\, \ldots ,\,\, t^{d}$$
of $G_t$ has to be linearly dependent over $\F_p$. Thus, there is 
$$(0,0, \ldots , 0) \not= (a_0, a_1,\ldots , a_d) \in \F_p^{\oplus d}$$
such that 
$$a_0 + a_1t + \ldots + a_dt^d = 0$$
in $G_t \subset k$. However, this contradicts to the fact that $t$ is transcendental over $\F_p$. 
\end{proof}

Let $E$ be the elliptic curve defined over $k$ by the Weierstrass equation
$$y^2 = x(x-1)(x-t)\,\, .$$
Note that $E/\langle -1_E \rangle = \P^1$, the associated quotient map $E \to \P^1$ is given by $(x,y)\mapsto x$ and the points $0$, $1$, $t$ and $\infty$ of $\P^1(k)$ are exactly the branch points of this quotient map. 

Let $F$ be any elliptic curve defined over $k$ such that $F$ is not isogenous to $E$. For instance, we may take a supersingular elliptic curve defined over $k$ as $F$. Note that there certainly exists a supersingular elliptic curve $F$ over $k$ and $E$ is not a supersingular (see eg. subsection "Elliptic curves in Characteristic $p >0$" in \cite[Section 22]{Mu74}). In particular, $E$ and $F$ are not isogenous over $k$ (see eg. subsection "The $p$-rank" in \cite[Section 15]{Mu74}).

Throughout this section, we denote by 
$$X := {\rm Km} (E \times F)$$ 
the Kummer K3 surface accociated to the product abelian surface $E \times F$, that is, the minimal resolution of the quotient surface $E \times F/\langle (-1_E, -1_F) \rangle$. We write $H^0(X, \Omega_X^2) = k\omega_X$. Then $\omega_X$ is a nowhere vanishing regular global $2$-form on $X$ and it is induced by a nowhere vanishing regular global $2$-form on $E \times F$. 

Since $E$ and $F$ are not isogenous, the Picard number $\rho(E \times F)$ of $E \times F$ is $2$ and therefore the Picard number $\rho(X)$ of $X$ is $18$ by \cite[Proposition 1 and Appendix]{Sh75}. In particular, our K3 surface $X$ is not supersingular. 

Let $\{a_i\}_{i=1}^{4}$ and $\{b_i\}_{i=1}^{4}$ be the $2$-torsion subgroups of $F$ and $E$ respectively. Then $X$ contains 24 "visible" smooth rational curves as in Figure \ref{fig1}. Here smooth rational curves $E_i$, $F_i$ ($1 \le i \le 4$) are arising from the elliptic curves $E \times \{a_i\}$, $\{b_i\} \times F$ on $E \times F$. Smooth rational curves $C_{ij}$ ($1\le i,j \le 4$) are the exceptional curves over the $A_1$-singular points of the quotient surface $E \times F/\langle -1_{E \times F}\rangle$. Throughout this section, we will freely use the names of curves in Figure \ref{fig1}. 

\begin{figure}
 %\centering
% \input{alldiv.tpc} 
\unitlength 0.1in
\begin{picture}(25.000000,24.000000)(-1.000000,-23.500000)
\put(4.500000, -22.000000){\makebox(0,0)[rb]{$F_1$}}%
\put(9.500000, -22.000000){\makebox(0,0)[rb]{$F_2$}}%
\put(14.500000, -22.000000){\makebox(0,0)[rb]{$F_3$}}%
\put(19.500000, -22.000000){\makebox(0,0)[rb]{$F_4$}}%
\put(0.250000, -18.500000){\makebox(0,0)[lb]{$E_1$}}%
\put(0.250000, -13.500000){\makebox(0,0)[lb]{$E_2$}}%
\put(0.250000, -8.500000){\makebox(0,0)[lb]{$E_3$}}%
\put(0.250000, -3.500000){\makebox(0,0)[lb]{$E_4$}}%
\put(6.000000, -16.000000){\makebox(0,0)[lt]{$C_{11}$}}%
\put(6.000000, -11.000000){\makebox(0,0)[lt]{$C_{12}$}}%
\put(6.000000, -6.000000){\makebox(0,0)[lt]{$C_{13}$}}%
\put(6.000000, -1.000000){\makebox(0,0)[lt]{$C_{14}$}}%
\put(11.000000, -16.000000){\makebox(0,0)[lt]{$C_{21}$}}%
\put(11.000000, -11.000000){\makebox(0,0)[lt]{$C_{22}$}}%
\put(11.000000, -6.000000){\makebox(0,0)[lt]{$C_{23}$}}%
\put(11.000000, -1.000000){\makebox(0,0)[lt]{$C_{24}$}}%
\put(16.000000, -16.000000){\makebox(0,0)[lt]{$C_{31}$}}%
\put(16.000000, -11.000000){\makebox(0,0)[lt]{$C_{32}$}}%
\put(16.000000, -6.000000){\makebox(0,0)[lt]{$C_{33}$}}%
\put(16.000000, -1.000000){\makebox(0,0)[lt]{$C_{34}$}}%
\put(21.000000, -16.000000){\makebox(0,0)[lt]{$C_{41}$}}%
\put(21.000000, -11.000000){\makebox(0,0)[lt]{$C_{42}$}}%
\put(21.000000, -6.000000){\makebox(0,0)[lt]{$C_{43}$}}%
\put(21.000000, -1.000000){\makebox(0,0)[lt]{$C_{44}$}}%
\special{pa 500 2200}%
\special{pa 500 0}%
\special{fp}%
\special{pa 1000 2200}%
\special{pa 1000 0}%
\special{fp}%
\special{pa 1500 2200}%
\special{pa 1500 0}%
\special{fp}%
\special{pa 2000 2200}%
\special{pa 2000 0}%
\special{fp}%
\special{pa 0 1900}%
\special{pa 450 1900}%
\special{fp}%
\special{pa 550 1900}%
\special{pa 950 1900}%
\special{fp}%
\special{pa 1050 1900}%
\special{pa 1450 1900}%
\special{fp}%
\special{pa 1550 1900}%
\special{pa 1950 1900}%
\special{fp}%
\special{pa 0 1400}%
\special{pa 450 1400}%
\special{fp}%
\special{pa 550 1400}%
\special{pa 950 1400}%
\special{fp}%
\special{pa 1050 1400}%
\special{pa 1450 1400}%
\special{fp}%
\special{pa 1550 1400}%
\special{pa 1950 1400}%
\special{fp}%
\special{pa 0 900}%
\special{pa 450 900}%
\special{fp}%
\special{pa 550 900}%
\special{pa 950 900}%
\special{fp}%
\special{pa 1050 900}%
\special{pa 1450 900}%
\special{fp}%
\special{pa 1550 900}%
\special{pa 1950 900}%
\special{fp}%
\special{pa 0 400}%
\special{pa 450 400}%
\special{fp}%
\special{pa 550 400}%
\special{pa 950 400}%
\special{fp}%
\special{pa 1050 400}%
\special{pa 1450 400}%
\special{fp}%
\special{pa 1550 400}%
\special{pa 1950 400}%
\special{fp}%
\special{pa 200 2000}%
\special{pa 600 1600}%
\special{fp}%
\special{pa 200 1500}%
\special{pa 600 1100}%
\special{fp}%
\special{pa 200 1000}%
\special{pa 600 600}%
\special{fp}%
\special{pa 200 500}%
\special{pa 600 100}%
\special{fp}%
\special{pa 700 2000}%
\special{pa 1100 1600}%
\special{fp}%
\special{pa 700 1500}%
\special{pa 1100 1100}%
\special{fp}%
\special{pa 700 1000}%
\special{pa 1100 600}%
\special{fp}%
\special{pa 700 500}%
\special{pa 1100 100}%
\special{fp}%
\special{pa 1200 2000}%
\special{pa 1600 1600}%
\special{fp}%
\special{pa 1200 1500}%
\special{pa 1600 1100}%
\special{fp}%
\special{pa 1200 1000}%
\special{pa 1600 600}%
\special{fp}%
\special{pa 1200 500}%
\special{pa 1600 100}%
\special{fp}%
\special{pa 1700 2000}%
\special{pa 2100 1600}%
\special{fp}%
\special{pa 1700 1500}%
\special{pa 2100 1100}%
\special{fp}%
\special{pa 1700 1000}%
\special{pa 2100 600}%
\special{fp}%
\special{pa 1700 500}%
\special{pa 2100 100}%
\special{fp}%
\end{picture}%
 \caption{Curves $E_i$, $F_j$ and $C_{ij}$}
 \label{fig1}
\end{figure}
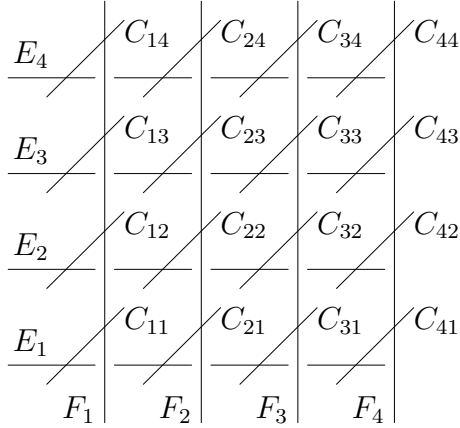

\begin{definition}\label{def31}
As in \cite{DO19}, we set 
$$C := E_1 = E/\langle -1_E \rangle \subset X\,\, .$$
We may and do use $x$ in the Weierstrass equation of $E$ as an affine coordinate on $C$ and also assume that under the affine coordinate $x$,  
$$C \cap C_{11} = \{\infty\}\,\, , \,\, C \cap C_{21} = \{0\}\,\, ,\,\, C \cap C_{31} = \{1\} \,\, , \,\, C \cap C_{41} = \{t\}\,\, .$$
We define the point $P \in C(k) \subset X(k)$ by
$$P:= \infty\,\, ,$$
that is, the intersection point of $C$ and $C_{11}$. 
\end{definition}

Let 
$$\theta = [(1_E, -1_F)] = [(-1_E, 1_F)] \in {\rm Aut}\, (X/k)$$ 
be the automorphism of $X$ induced by the automorphism $(1_E, -1_F) \in {\rm Aut}\, ((E \times F)/k)$ of 
$E \times F$. Then $\theta$ is of order $2$. Set 
$$B := \cup_{i=1}^{4} E_i \cup \cup_{j=1}^{4} F_j\,\, .$$
The following theorem was proved in \cite[Lemmas (1.3), (1.4)]{Og89} over $\C$. However, the proof there is based on the global Torelli theorem for complex K3 surfaces and Hodge theory. So, one can not apply the argument there for our $X$ over $k$.

\begin{thm}\label{thm31} 
The following properties hold also over $k$.
\begin{enumerate}
\item The Picard group ${\rm Pic}\, (X)$ is torsion free.
\item $\theta^* = \id$ on ${\rm Pic}\, (X)$ and $\theta^* \omega_X = -\omega_X$.\item ${\rm Aut}\, ((E \times F)/k) = {\rm Aut}\, (E/k) \times {\rm Aut}\, (F/k)$ and $f \circ \theta = \theta \circ f$ for all $f \in {\rm Aut}\,(X/k)$. 
\item Let $X^{\theta}$ be the fixed locus of $\theta$. Then $X^{\theta} = B$. 
\item ${\rm Aut}\,(X/k) = {\rm Aut}\, (X/k, B)$. 
\end{enumerate}
\end{thm}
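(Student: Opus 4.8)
The plan is to prove the five assertions in the order $(1),(2),(3),(4),(5)$, since $(2)$ uses $(1)$, the centrality of $\theta$ in $(3)$ feeds both $(4)$ and $(5)$, and $(5)$ is formal once $(3)$ and $(4)$ are in hand. For $(1)$ I would not use anything special about $X$ but only that it is a K3 surface: a torsion class $L$ with $L^{\otimes n}\cong\sO_X$ and $\gcd(n,p)=1$ yields a connected \'etale cyclic cover $\tau\colon Y\to X$ with $K_Y=\tau^*K_X=0$ and $\chi(\sO_Y)=n\chi(\sO_X)=2n$, forcing $Y$ to be a K3 or abelian surface and hence $2n\in\{2,0\}$, i.e. $n=1$; thus $\Pic(X)$ has no prime-to-$p$ torsion, and the $p$-primary part is excluded since the cycle class map embeds $\Pic(X)$, modulo $p$-torsion, into the torsion-free group $H^2_{\mathrm{\acute et}}(X,\Z_\ell(1))$ ($\ell\neq p$) — this is the standard torsion-freeness of $\Pic$ of a K3, which holds in any characteristic. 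For $(2)$, the form identity is the computation that $(1_E,-1_F)^*(\alpha_E\wedge\alpha_F)=-(\alpha_E\wedge\alpha_F)$ on $E\times F$, whence $\theta^*\omega_X=-\omega_X$. For the action on $\Pic(X)$ I would check directly that $\theta$ preserves each of the $24$ curves of Figure \ref{fig1}: it fixes every $E_i$ pointwise (since $\theta=\id$ on $E\times\{a_i\}$), it fixes every $F_j$ pointwise (since $\theta$ and the Kummer involution $-1_{E\times F}$ both restrict to $-1_F$ on $\{b_j\}\times F$, hence agree after the quotient), and it preserves each $C_{ij}$. Therefore $\theta^*$ fixes all $24$ classes, which span $\Pic(X)\otimes\Q$ (of rank $\rho(X)=18$) by the standard structure of the Kummer lattice together with the images of the two generators of $\mathrm{NS}(E\times F)$; so $\theta^*=\id$ on $\Pic(X)\otimes\Q$, and by $(1)$ on $\Pic(X)$ itself.

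For $(3)$, the first equality is isogeny theory: $\Hom(E,F)=\Hom(F,E)=0$ because $E$ and $F$ are not isogenous, so every endomorphism of the abelian surface $E\times F$ is block diagonal and $\Aut((E\times F)/k)=\Aut(E/k)\times\Aut(F/k)$. For the centrality of $\theta$, let $\chi\colon\Aut(X/k)\to k^{\times}$, $f^*\omega_X=\chi(f)\,\omega_X$, be the canonical representation; it is a homomorphism, and with $N:=\ker\big(\Aut(X/k)\to{\rm O}(\Pic(X))\big)$ we have $\theta\in N$ and $\chi(\theta)=-1$ by $(2)$. The mechanism is then: for any $f\in\Aut(X/k)$ the conjugate $f\theta f^{-1}$ again lies in the normal subgroup $N$, and $\chi(f\theta f^{-1})=\chi(\theta)=-1$ because $k^{\times}$ is abelian; so $f\theta f^{-1}=\theta$ as soon as we know $\chi|_N$ is injective.

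The hard part will be precisely this injectivity of $\chi|_N$, the positive-characteristic substitute for Nikulin's theorem that an automorphism acting trivially on the N\'eron--Severi lattice is detected by its action on the regular $2$-form. An element $g\in\ker(\chi|_N)$ acts trivially on $\Pic(X)$ and on $\omega_X$; fixing an ample class, it lies in a finite group, and since finite subgroups of $k^{\times}$ have order prime to $p$, $g$ is a tame symplectic automorphism. Here non-supersingularity of $X$ is indispensable: using Theorem \ref{thm2}(2) I would lift the pair $(X,g)$ to characteristic $0$ and apply the complex statement (a finite symplectic automorphism trivial on $\mathrm{NS}$ is trivial), the whole step being underwritten by the finiteness of the canonical representation of Jang \cite[Proposition 3.5]{Ja13} (Theorem \ref{thm32}), which plays the role that the complex Torelli theorem played in \cite{Og89}.

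Finally, $(4)$ is an explicit local computation. Since $\theta^*\omega_X=-\omega_X$ and $\Char k\neq 2$, at each fixed point $\theta$ acts on the tangent plane with determinant $-1$, so $X^{\theta}$ is a disjoint union of smooth curves with no isolated points. Away from the exceptional curves the fixed points are the images of $E\times F[2]$, namely $\cup_i E_i$; the restriction computation in $(2)$ gives in addition $\cup_j F_j\subset X^{\theta}$; and on each $C_{ij}$ the differential of $\theta$ is ${\rm diag}(1,-1)$, so $\theta$ fixes only the two points $C_{ij}\cap E_i$ and $C_{ij}\cap F_j$ and $C_{ij}\not\subset X^{\theta}$. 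Hence $X^{\theta}=B$. Assertion $(5)$ is then immediate: by $(3)$ any $f\in\Aut(X/k)$ commutes with $\theta$, so $f$ preserves $X^{\theta}$, and by $(4)$ this says $f(B)=B$; therefore $\Aut(X/k)=\Aut(X/k,B)$.
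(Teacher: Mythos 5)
Your architecture matches the paper's for parts (2), (4), (5) and the first half of (3); even your group-theoretic reduction of the centrality of $\theta$ --- conjugates of $\theta$ stay in $N=\ker\big(\Aut(X/k)\to \mathrm{O}(\Pic\,(X))\big)$ and have the same $\chi$-value, so it suffices that $\chi|_N$ be injective --- is exactly the paper's reduction, since the paper sets $g=\theta\circ f\circ\theta^{-1}\circ f^{-1}$, notes $g^*=\id$ on $\Pic\,(X)$ and $g^*\omega_X=\omega_X$, and then shows any such $g$ equals $\id_X$. The genuine gap is your proof of this injectivity. First, the tameness claim is a non sequitur: $g$ lies in the \emph{kernel} of $\chi$, so the fact that finite subgroups of $k^{\times}$ have order prime to $p$ says nothing about the order of $g$; wild (order $p$) symplectic automorphisms of K3 surfaces do exist in positive characteristic. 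Second, Theorem \ref{thm2}(2) does not lift automorphisms: the specialization map $\Aut(\tilde{S}/\tilde{K})\to\Aut(S/K)$ has finite kernel and \emph{cokernel}, which is far from surjectivity, so there is no way to ``lift the pair $(X,g)$.'' Worse, such a lift can be genuinely impossible: symplectic automorphisms of complex K3 surfaces have order at most $8$, so a wild symplectic automorphism of order $p\ge 11$ admits no characteristic-zero lift at all --- your argument would first have to exclude precisely the automorphisms it is meant to handle. Third, Jang's theorem (Theorem \ref{thm32}) controls the \emph{image} of the canonical representation, not the kernel of $\chi|_N$, so it cannot underwrite this step.

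The paper closes this gap with a geometric argument special to the Kummer structure, which you should adopt: since $g^*=\id$ on $\Pic\,(X)$ and $|R|=\{R\}$ for every $(-2)$-curve $R$, the automorphism $g$ preserves the divisor $\sum_{i,j}C_{ij}$, which is the branch locus of the degree-$2$ cover $\pi: V\to X$, where $V$ is the blow-up of $E\times F$ at the sixteen $2$-torsion points. Torsion-freeness of $\Pic\,(X)$ (part (1)) together with separability of degree-$2$ maps in odd characteristic makes this branched double cover unique up to isomorphism over $X$ (Fujita), so $g$ lifts to $g_V\in\Aut(V/k)$ preserving each exceptional curve $D_{ij}$, hence descends to $g_{E\times F}\in\Aut((E\times F)/k)$ fixing all sixteen $2$-torsion points and fixing $\omega_{E\times F}$; writing $g_{E\times F}=(g_E,g_F)$ via the product decomposition, this forces $g_E=\pm\id_E$, $g_F=\pm\id_F$, then $g_{E\times F}=\pm\id_{E\times F}$, and therefore $g=\id_X$. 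Two smaller points: in (1) your $\ell$-adic cycle class map does not detect $p$-torsion (flat cohomology or $\Pic^{\tau}$ would be needed there), whereas the paper's Riemann--Roch argument --- $nL=0$ gives $(L,L)_X=0$, so $\chi(X,L)=2$, so $\pm L$ is effective, so $L=0$ --- is uniform in the characteristic and simpler; your (4) and (5) are fine and agree with the paper, which treats (4) as immediate from the shape of $\theta$.
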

\begin{proof} Assume that $L \in {\rm Pic}\, (X)$ satisfies $nL =0$ in ${\rm Pic}\, (X)$ for some positive integer $n$. Then $(L, L)_X = 0$ and therefore $\chi(X, L) = 2$ by the Riemann-Roch formula. Combining this with the Serre duality, we deduce that either $L$ or $-L$ is represented by an effective divisor. This implies $L = 0$, as $(\pm L, H)_X = 0$ for a very ample divisor $H$ on $X$ by $nL = 0$ ($n \not= 0$). This proves (1).  

By $\rho(X) = 18$, we see that ${\rm Pic}\, (X) \otimes \Q$ is generated by the $24$ rational curves in Figure \ref{fig1}. It is clear that $\theta$ preserves each of these $24$ curves. It follows that  $\theta^* = \id$ on ${\rm Pic}\, (X) \otimes \Q$ and therefore $\theta^* = \id$ also on ${\rm Pic}\, (X)$ by (1). By the shape of $\theta$, clearly $\theta^*\omega_X = -\omega_X$. This shows (2). 

The first assertion of (3) is an immediadte consequence of our assumption that $E$ and $F$ are not isogenous. Let 
$$g := \theta \circ f \circ \theta^{-1} \circ f^{-1} \in {\rm Aut}\, (X/k)\,\, .$$ We are going to show that $g = \id_X$. We have $g^{*} = id$ on ${\rm Pic}\, (X)$ by (1) and $g^*\omega_X = \omega_X$ by the definition of $g$. In particular, $g(R) = R$ for all smooth rational curves $R \subset X$. This is because $g^*(R) = R$ in ${\rm Pic}\, (X)$ and $|R| = \{R\}$ by $(R^2) = -2$. Here we recall that $(R^2) = -2$ for any smooth rational curve $R$ on $X$ by the adjunction formula. Thus 
$$g(\sum_{i,j} C_{ij}) = \sum_{i,j} C_{ij}\,\, .$$ 
Let $V$ be the blow-up of $E \times F$ at the sixteen $2$-torsion points $P_{ij}$ and $D_{ij} \subset V$ the exceptional curve over the $2$-torsion point $P_{ij}$. The induced morphism $\pi : V \to X$ is a finite cover of degree $2$ branched along $\sum_{i,j} C_{ij}$. Since ${\rm Pic}\, (X)$ is torsion free and since any degree $2$ map is separable over $k$ of odd characteristic, the converse is also true. That is, if $\pi' : V' \to X$ is a finite double cover branched along $\sum_{i,j} C_{ij}$, then $\pi : V \to X$ and $\pi' : V' \to X$ are isomorphic over $X$ (See eg. \cite[Theorem 2.6]{Fu83}). Applying this for $g \circ \pi : V \to X$ and $\pi : V \to X$, we deduce that $g$ lifts to an automorphism $g_V$ of $V$ such that $g_V(D_{ij}) = D_{ij}$ for each $(i, j)$. Then $g_V$ descends to the automorphism $g_{E \times F}$ of $E \times F$ such that 
$$g_{E \times F}(P_{ij}) = P_{ij}$$ 
for each $2$-torsion point $P_{ij}$ of $E \times F$ and 
$$g_{E \times F}^*\omega_{E \times F} = \omega_{E \times F}\,\, .$$ 
Write $g_{E \times F} = (g_E, g_F)$ ($g_E \in {\rm Aut}\, (E/k)$, $g_F \in {\rm Aut}\, (F/k)$) by using the first assertion of (3). Then $g_E = \pm \id_E$ and $g_F = \pm \id_F$ by $g_{E \times F}(P_{ij}) = P_{ij}$. Combining this with $g_{E \times F}^*\omega_{E \times F} = \omega_{E \times F}$, we obtain that $g_{E \times F} = \pm \id_{E \times F}$. Hence $g = \id_X$, i.e., $\theta \circ f = f \circ \theta$ on $X$. This proves (3). 

The assertion (4) is immediate from the shape of $\theta$. The assertion (5) follows from (3) and (4). This completes the proof.
\end{proof}

\begin{proposition}\label{prop32} 
${\rm Aut}\, (X/k, P) \subset {\rm Aut}\, (X/k, C)$. That is, $f(C) = C$ holds for every $f \in {\rm Aut}\, (X/k, P)$. In particular, 
$${\rm Aut}\, (X/k, P) = {\rm Aut}\, (X/k, C, P)\,\, .$$
\end{proposition}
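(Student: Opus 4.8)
The plan is to reduce everything to Theorem \ref{thm31}(5) together with the local geometry of the configuration in Figure \ref{fig1} at the point $P$. First I would invoke Theorem \ref{thm31}(5), which gives ${\rm Aut}\,(X/k) = {\rm Aut}\,(X/k, B)$ for $B = \cup_{i=1}^{4} E_i \cup \cup_{j=1}^{4} F_j$. Since an automorphism maps irreducible curves to irreducible curves, every $f \in {\rm Aut}\,(X/k)$ permutes the eight components $E_1, \ldots, E_4, F_1, \ldots, F_4$ of $B$ among themselves; in particular $f(C) = f(E_1)$ is one of these eight curves for every $f \in {\rm Aut}\,(X/k, P)$.

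The next and decisive step is to record which of these eight curves pass through the point $P = \infty = E_1 \cap C_{11}$. I would argue from the Kummer configuration that the $E_i$ are pairwise disjoint, the $F_j$ are pairwise disjoint, and each $E_j$ is disjoint from each $F_i$ on $X$, since the corresponding elliptic curves on $E \times F$ meet only at a single $2$-torsion point, whose image is a node separated by one of the exceptional curves $C_{ij}$ in the minimal resolution $X$. It follows that $E_1$ is the unique curve among $\{E_i, F_j\}$ containing $P$.

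Combining the two steps then concludes the first assertion at once: for $f \in {\rm Aut}\,(X/k, P)$ we have $P = f(P) \in f(E_1)$, while $f(E_1)$ lies in $\{E_1, \ldots, E_4, F_1, \ldots, F_4\}$; since $E_1$ is the only such curve through $P$, necessarily $f(E_1) = E_1$, that is $f(C) = C$. This is exactly ${\rm Aut}\,(X/k, P) \subset {\rm Aut}\,(X/k, C)$. The final equality ${\rm Aut}\,(X/k, P) = {\rm Aut}\,(X/k, C, P)$ is then formal: the inclusion $\supset$ is obvious, and $\subset$ holds because any $f$ fixing $P$ also fixes $C$ by what was just shown.

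I expect the only real content to sit in the second step, namely certifying that among the eight curves of $B$ only $E_1$ passes through $P$. Everything else is either a direct citation of Theorem \ref{thm31}(5) or a purely set-theoretic manipulation of the groups ${\rm Aut}\,(X/k, -)$. The one point deserving care is the disjointness $E_j \cap F_i = \emptyset$ on the resolved surface $X$, which must be read off from the blow-up of the sixteen nodes rather than from the singular quotient $E \times F/\langle -1_{E \times F}\rangle$ itself.
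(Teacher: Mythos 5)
Your proof is correct and is essentially the paper's own argument: the paper likewise deduces the claim from Theorem \ref{thm31}(5) together with the observation that $C$ is the unique irreducible component of $B$ passing through $P$ (a fact the paper states without proof and you justify via the disjointness of the components of $B$ in the Kummer configuration). No gap; your extra detail on $E_i \cap F_j = \emptyset$ on the resolution is a faithful elaboration of what the paper leaves implicit.
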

\begin{proof} By Theorem \ref{thm31}, we have ${\rm Aut}\,(X) = {\rm Aut}\,(X, B)$. This implies the result, because $C$ is the unique irreducible component of $B$ such that $P \in C(k)$.  
\end{proof}

\begin{lemma}\label{lem33} Let $R \subset X$ be a smooth rational curve such that $R \not\subset B$. Then
\begin{enumerate}
\item $\theta(R) = R$ and $\theta|_R \in {\rm Aut}\, (R/k)$ is of order $2$. Moreover, $\theta|_R$ has exactly two fixed closed points and $d(\theta|_R)_Q = -1$ at each fixed closed point $Q \in R(k)$ of $\theta|_R$. 
\item Assume furthermore that $P \in R(k)$. Then, for each $f \in {\rm Aut}\, (X/k, P)$, either $f(R) = R$ or $f(R)$ and $R$ are tangent at $P$. 
\end{enumerate}
\end{lemma}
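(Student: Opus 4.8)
The plan is to prove (1) directly and then to derive (2) from (1) together with a local analysis of $d\theta$ at $P$.

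\smallskip

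\emph{Part (1).} Since $R$ is a smooth rational curve, the adjunction formula gives $(R^2)=-2$, so $R$ is the unique effective divisor in its class and $|R|=\{R\}$. As $\theta^*=\id$ on ${\rm Pic}\,(X)$ by Theorem \ref{thm31}(2), we get $\theta(R)\in|R|=\{R\}$, i.e. $\theta(R)=R$. Hence $\theta|_R$ is an automorphism of $R\cong\P^1$ with $(\theta|_R)^2=\id$. If $\theta|_R=\id_R$ then $R\subset X^{\theta}=B$ by Theorem \ref{thm31}(4), contradicting $R\not\subset B$; therefore $\theta|_R$ has order $2$. Over the algebraically closed field $k$ of odd characteristic every involution of $\P^1$ is semisimple, hence conjugate in ${\rm Aut}\,(\P^1/k)={\rm PGL}_2(k)$ to $z\mapsto -z$, which has exactly two fixed closed points with derivative $-1$ at each. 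This proves (1); note this is the only place where ${\rm char}\, k\neq 2$ enters, ruling out a unipotent involution with a single fixed point.

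\smallskip

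\emph{Part (2), local analysis at $P$.} The key input is the action of $d\theta_P$ on the $2$-dimensional tangent space $T_{X,P}$. Since $P=\infty\in C=E_1\subset B$ and $C$ is the only component of $B$ through $P$, the point $P$ is a smooth point of $B$ lying on $C$; as $\theta$ fixes $C$ pointwise, $d\theta_P$ is the identity on the line $T_{C,P}$. Evaluating the relation $\theta^*\omega_X=-\omega_X$ of Theorem \ref{thm31}(2) at $P$ and using that $\omega_X$ is nowhere vanishing gives $\det d\theta_P=-1$. Since $d\theta_P^2=\id$ is semisimple (odd characteristic) with determinant $-1$, its eigenvalues are $+1$ and $-1$ with one-dimensional eigenspaces; thus the $(+1)$-eigenspace is exactly $T_{C,P}$, and the $(-1)$-eigenspace is a line $N\subset T_{X,P}$ transverse to $C$.

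\smallskip

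\emph{Part (2), conclusion.} I first claim that every smooth rational curve $R'\ni P$ with $R'\not\subset B$ satisfies $T_{R',P}=N$. Indeed, by (1) we have $\theta(R')=R'$, and since $P\in B=X^{\theta}$ is a fixed point of $\theta|_{R'}$, part (1) gives $d(\theta|_{R'})_P=-1$; but $d(\theta|_{R'})_P$ is the restriction of $d\theta_P$ to the invariant line $T_{R',P}$, so $T_{R',P}$ is a $(-1)$-eigenvector and therefore equals $N$. Now take $f\in{\rm Aut}\,(X/k,P)$. As $f$ fixes $P$ and preserves $B$ by Theorem \ref{thm31}(5), the image $f(R)$ is again a smooth rational curve through $P$ not contained in $B$. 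Applying the claim to both $R$ and $f(R)$ yields $T_{R,P}=N=T_{f(R),P}$, so $R$ and $f(R)$ share their tangent line at $P$: either $f(R)=R$, or they are distinct and tangent at $P$, which is the assertion.

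\smallskip

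The main work is the local eigenvalue computation of the second paragraph: pinning down $\det d\theta_P=-1$ from $\theta^*\omega_X=-\omega_X$ and checking that $T_{C,P}$ is \emph{exactly} the $(+1)$-eigenspace, so that the transverse $(-1)$-line $N$ is unambiguous and independent of the chosen curve. Once this is in place the rest is formal. I would also remark that the argument actually shows $R$ and $f(R)$ are always tangent at $P$, so the commutation relation $f\theta=\theta f$ of Theorem \ref{thm31}(3) is not needed for this lemma.
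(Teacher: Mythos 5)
Your proof is correct. Note that the paper's own proof of Lemma \ref{lem33} is nothing more than the remark that $-1 \neq 1$ in odd characteristic plus a citation of \cite[Lemma 3.5]{DO19}; your argument is, in substance, exactly the proof that citation stands for, built from the same ingredients: $\theta(R)=R$ from $\theta^*=\id$ on ${\rm Pic}\,(X)$ together with $|R|=\{R\}$, nontriviality of $\theta|_R$ from $X^{\theta}=B$, semisimplicity of involutions of $\P^1$ (and of $d\theta_P$) in odd characteristic, and $\det d\theta_P=-1$ forced by $\theta^*\omega_X=-\omega_X$. The one small deviation worth keeping is your closing observation: by applying the claim of part (1) directly to $f(R)$ --- which is again a smooth rational curve through $P$ not contained in $B$, since $f$ preserves $B$ by Theorem \ref{thm31}(5) --- you pin both $T_{R,P}$ and $T_{f(R),P}$ to the $(-1)$-eigenline $N$ without ever invoking the commutation relation $f\circ\theta=\theta\circ f$ of Theorem \ref{thm31}(3), which one would otherwise use to show that $df_P$ preserves the eigenspaces of $d\theta_P$. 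This makes your write-up slightly more economical than the argument the paper points to, and it is a fully self-contained replacement for the citation.
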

\begin{proof} Note that $-1 \not= 1$ in the field $k$ of odd characteristic. So, once Theorem \ref{thm31} is established, then exactly the same proof as \cite[Lemma 3.5]{DO19} works also over $k$.
\end{proof}

Recall that ${\rm Aut}\, (X/k, P) = {\rm Aut}\, (X/k, C, P)$ (Proposition \ref{prop32}). We define two differential representations of ${\rm Aut}\,(X/k, P)$, $d_{X, P}$ on the tangent space $T_{X, P} \simeq k^2$ and $d_{X, C, P}$ on the tangent space $T_{C, P} \simeq k$, and two subgroups $G(X, P)$ and $G(X, C, P)$ of ${\rm Aut}\, (X/k, P)$ by
$$d_{X, P} : {\rm Aut}\, (X/k, P) \to {\rm GL}(T_{X, P})\,\, ;\,\, f \mapsto df_P\,\, ,$$
$$d_{X, C, P} : {\rm Aut}\, (X/k, P) \to {\rm GL}(T_{C, P})\,\, ;\,\, f \mapsto d(f|_C)_P\,\, ,$$
$$G(X, P) := {\rm Ker}\, (d_{X, P} : {\rm Aut}\, (X/k, P) \to {\rm GL}(T_{X, P})\,\, ;\,\, f \mapsto df_P)\,\, ,$$
$$G(X, C, P) := {\rm Ker}\, (d_{X, C, P} : {\rm Aut}\, (X/k, P) \to {\rm GL}(T_{C, P})\,\, ;\,\, f \mapsto d(f|_C)_P)\,\, .$$
Clearly $G(X, P) \subset G(X, C, P)$ as groups.

Let $0 \not= v_1 \in T_{C, P} \subset T_{X, P}$ and $0 \not= v_2 \in T_{C_{11}, P} \subset T_{X, P}$. 
Then $\langle v_1, v_2 \rangle$ forms a basis of the $k$-vector space $T_{X, P}$. 
\begin{proposition}\label{prop34} ${\rm Im}\, (d_{X, P})$ is simultaneously diagonalizable with respect to the basis $\langle v_1, v_2 \rangle$ of $T_{X, P}$. 
\end{proposition}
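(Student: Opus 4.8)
The plan is to show directly that for every $f \in {\rm Aut}\,(X/k, P)$ the differential $df_P$ preserves \emph{each} of the two lines $\langle v_1 \rangle = T_{C, P}$ and $\langle v_2 \rangle = T_{C_{11}, P}$. Once this is done, $df_P$ is represented by a diagonal matrix in the basis $\langle v_1, v_2 \rangle$; since this holds simultaneously for all $f$, the image ${\rm Im}\,(d_{X, P})$ consists of matrices that are diagonal in $\langle v_1, v_2 \rangle$, which is exactly the claimed simultaneous diagonalizability. So the task splits into two independent verifications, one per coordinate line.

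The $v_1$-direction is immediate from what has already been proved. By Proposition \ref{prop32} we have ${\rm Aut}\,(X/k, P) = {\rm Aut}\,(X/k, C, P)$, so every $f$ satisfies $f(C) = C$ while fixing $P \in C(k)$. Hence $df_P$ carries $T_{C, P}$ to $T_{C, f(P)} = T_{C, P}$, i.e. $df_P(\langle v_1 \rangle) = \langle v_1 \rangle$.

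For the $v_2$-direction I would apply Lemma \ref{lem33}(2) to the curve $R = C_{11}$. This is legitimate: $C_{11}$ is a smooth rational curve with $C_{11} \not\subset B$ (recall $B$ is the union of the $E_i$ and $F_j$ only, and not of the exceptional curves $C_{ij}$), and $P \in C_{11}(k)$ since $C \cap C_{11} = \{P\}$ by Definition \ref{def31}. Lemma \ref{lem33}(2) then yields, for each $f \in {\rm Aut}\,(X/k, P)$, that either $f(C_{11}) = C_{11}$ or $f(C_{11})$ and $C_{11}$ are tangent at $P$. In the first case $df_P(T_{C_{11}, P}) = T_{C_{11}, P}$ exactly as above; in the second, tangency at $P$ means precisely that $f(C_{11})$ and $C_{11}$ share a common tangent direction there, so $df_P(T_{C_{11}, P}) = T_{f(C_{11}), P} = T_{C_{11}, P}$. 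In either case $df_P(\langle v_2 \rangle) = \langle v_2 \rangle$.

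Combining the two, every $df_P$ preserves both $\langle v_1 \rangle$ and $\langle v_2 \rangle$, which is the assertion. The only genuinely delicate step is the tangency case of the $v_2$-direction: one must read ``$f(C_{11})$ and $C_{11}$ are tangent at $P$'' as the statement that the two smooth curves have the same tangent line inside $T_{X, P}$, so that the line $\langle v_2 \rangle$ is carried to itself by $df_P$. Everything else is formal bookkeeping, so I expect no serious obstacle once Lemma \ref{lem33}(2) is invoked with the correct auxiliary curve $C_{11}$.
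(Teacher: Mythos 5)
Your proposal is correct and follows the same route as the paper: the paper's (one-line) proof likewise asserts that $df_P$ preserves $T_{C,P}$ (via Proposition \ref{prop32}) and preserves $T_{C_{11},P}$ by Lemma \ref{lem33}(2), exactly the two verifications you carry out. Your handling of the tangency alternative in Lemma \ref{lem33}(2) --- noting that tangency of $f(C_{11})$ and $C_{11}$ at $P$ still forces $df_P(\langle v_2\rangle)=\langle v_2\rangle$ --- is precisely the point the paper leaves implicit.
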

\begin{proof} This is because $df_P$ ($f \in {\rm Aut}\, (X/k, P)$) preserves $T_{C, P}$ and also preserves $T_{C_{11}, P}$ by Lemma \ref{lem33}(2).  
\end{proof}

Let $K$ be any algebraically closed field of odd characteristic and let $S$ be any K3 surface defined over $K$. Then we have $H^0(S, \Omega_S^2) = K \omega_S \simeq K$ and for each $f \in {\rm Aut}\, (S/K)$, there is a unique $\alpha(f) \in K^{\times}$ such that $f^{*}\omega_S = \alpha(f)\omega_S$. The group 
homomorphism 
$$\alpha : {\rm Aut}\, (S/K) \to {\rm GL}\, (K\omega_S) = K^{\times}\,\, ;\,\, f \mapsto \alpha(f)$$
is called the canonical representation of $S$ or of ${\rm Aut}\,(S/K)$.

\begin{thm}\label{thm32}
The image $\alpha({\rm Aut}\, (S/K))$ of the canonical representation is a finite group, hence a finite cyclic group, for any non-supersingular K3 surface $S$ defined over any algebraically closed field $K$ of odd characteristic. 
\end{thm}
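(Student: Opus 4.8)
The plan is to reduce the statement to the corresponding finiteness in characteristic zero, exploiting in an essential way that $S$ is \emph{not} supersingular. I recall the two external inputs. In characteristic zero the image of the canonical representation of any K3 surface is finite, hence cyclic; this is classical, see \cite[Theorem 14.10]{Ue75}. Moreover, since any finite subgroup of the multiplicative group of a field is cyclic, it suffices to prove that $\alpha({\rm Aut}\,(S/K))$ is merely \emph{finite}; the cyclicity of the image is then automatic.

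Because $S$ is not supersingular, Theorem \ref{thm2}(2) furnishes a discrete valuation ring $R$ with residue field $K$ and fraction field of characteristic $0$, a smooth projective family $\pi : X \to {\rm Spec}\, R$ with special fiber $S$ and geometric generic fiber $\tilde S$ a K3 surface over $\tilde K = \overline{Q(R)}$, together with a specialization map ${\rm sp} : {\rm Aut}\,(\tilde S/\tilde K) \to {\rm Aut}\,(S/K)$ having finite kernel and cokernel. Let $H := {\rm Im}\,({\rm sp})$; then $[{\rm Aut}\,(S/K) : H] < \infty$. Writing ${\rm Aut}\,(S/K) = \coprod_{i=1}^{r} g_i H$ with $r < \infty$, we obtain $\alpha({\rm Aut}\,(S/K)) = \bigcup_{i=1}^{r} \alpha(g_i)\,\alpha(H)$, so it is enough to prove that $\alpha(H)$ is finite.

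To control $\alpha$ on $H$ I would compare the canonical representations of $S$ and of $\tilde S$ through the family. The sheaf $\pi_*\Omega^2_{X/R}$ is an invertible $R$-module, say $R\cdot \omega$, restricting to $K\omega_S$ on $S$ and to $\tilde K\omega_{\tilde S}$ on $\tilde S$. The relative automorphism scheme $\underline{{\rm Aut}}(X/R)$ acts on $\pi_*\Omega^2_{X/R}$, producing a character $\chi : \underline{{\rm Aut}}(X/R) \to \G_m$ whose restriction to the special (resp. generic) fiber is the canonical representation $\alpha$ (resp. $\tilde\alpha$). Given $f \in H$, lift it through ${\rm sp}$ to some $\tilde f \in {\rm Aut}\,(\tilde S/\tilde K)$ and spread both out to a section $F$ of $\underline{{\rm Aut}}(X/R)$ over a suitable finite (necessarily residually trivial, as $K$ is algebraically closed) extension of $R$. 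Then $\chi(F) \in R^{\times}$ reduces to $\alpha(f)$ at the closed point and to $\tilde\alpha(\tilde f)$ at the generic point. By the characteristic-zero input, $\tilde\alpha(\tilde f)$ is a root of unity; hence $\chi(F)$ is a root of unity in $R^{\times}$ and $\alpha(f)$ is its reduction. Since $\tilde\alpha({\rm Aut}\,(\tilde S/\tilde K))$ is a finite set of roots of unity, their reductions form a finite set, so $\alpha(H)$ is finite, which completes the argument.

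The main obstacle is the third paragraph: making precise that the canonical representation is compatible with specialization. Concretely one must (i) know that the relative automorphism functor is represented by an unramified $R$-group scheme, so that an automorphism of $\tilde S$ lying in the relevant image side of ${\rm sp}$ really extends, after a residually trivial base change, to a relative automorphism $F$ acting on $\omega$; and (ii) verify that the character $\chi$ specializes compatibly, i.e. that the reduction of $\chi(F)$ computes $\alpha(f)$. A minor subtlety, harmless for finiteness, is that reduction kills the $p$-part of roots of unity, so $\alpha(H)$ is really the prime-to-$p$ reduction of $\tilde\alpha$. Finally, non-supersingularity enters \emph{only} through Theorem \ref{thm2}(2): it is exactly the hypothesis producing a characteristic-zero lift with a specialization map of finite kernel and cokernel, and the whole strategy breaks down in the supersingular case, where no such lift is available and one must instead argue directly with the $F$-crystal $H^2_{\mathrm{cris}}(S/W)$.
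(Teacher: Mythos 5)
Your proposal follows essentially the same route as the paper's proof (which itself recalls Jang's argument): invoke Theorem \ref{thm2}(2) to lift $S$ to characteristic zero, use the finiteness of the cokernel of the specialization map to reduce to the image subgroup, identify the canonical representation on that subgroup with the mod-$\mathbf{m}$ reduction of the canonical representation of $\tilde S$, and conclude by Ueno's finiteness theorem in characteristic zero. The extra care you take in spelling out the compatibility of the character with specialization (and the remark about the $p$-part of roots of unity) is exactly the step the paper compresses into one sentence, so there is no substantive difference.
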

\begin{proof} This is proved by Jang \cite[Proposition 3.5]{Ja13} as an important application of Theorem \ref{thm2} (2) due to Lieblich and Maulik. Here we recall the proof for the convenience of the readers. Let $\pi : X \to {\rm Spec}\, R$ be the lifting of $S$ in Theorem \ref{thm2} (2). Let $\omega_{X/R}$ be the relative regular $2$-form of $\pi$. Consider the canonical representation $\alpha_{S} = \alpha$ of $S$:
$$\alpha_{S} : {\rm Aut}\, (S/K) \to {\rm GL}\, (K \omega_{X/R}|_{S}) = K^{\times}\,\, .$$
Let $G$ be the image of the specialization map $${\rm Aut}\,(\tilde{S}/\tilde{K}) \to {\rm Aut}\,(S/K)\,\, $$
in Theorem \ref{thm2} (2).  Let ${\bf m}$ be the maximal ideal of $R$. Then the homomorphism
$$\alpha_{S}|_{G} : G \to {\rm GL}\, (K \omega_{X/R}|_{S}) = K^{\times}$$ is the mod ${\bf m}$-reduction of the canonical representation of $\tilde{S}$:
$$\alpha_{\tilde{S}} : {\rm Aut}\,(\tilde{S}/\tilde{K}) \to {\rm GL}\, (\tilde{K} \omega_{X/R}|_{\tilde{S}}) = \tilde{K}^{\times}\,\, .$$
Since $\tilde{K}$ is of characteristic $0$, the group ${\rm Im}\, \alpha_{\tilde{S}}$ is a finite cyclic group by \cite[Theorem 14.10]{Ue75}. Therefore ${\rm Im}\, \alpha_{S}|_{G} = \alpha_S(G)$ is also a finite cyclic group. Since $[{\rm Aut}\, (S/K) : G] < \infty$ by Theorem \ref{thm21} (2), it follows that $\alpha_S({\rm Aut}\, (S/K))$ is a finite subgroup of $K^{\times}$. Hence it is a finite cyclic group as claimed.
\end{proof}

Recall that $G(X, P)$ is a subgroup of $G(X, C, P)$.

\begin{proposition}\label{prop31}
\begin{enumerate}
\item $[G(X, C, P) : G(X, P)] < \infty$.
\item $G(X, P)$ is not finitely generated. 
\end{enumerate}
\end{proposition}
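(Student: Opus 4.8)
I will treat the two assertions separately, the first being a short eigenvalue computation and the second the real work.

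\emph{Part (1).} The key is to compare the two differential representations with the canonical representation $\alpha$. By Proposition \ref{prop34} I may write, for $f \in {\rm Aut}\,(X/k,P)$,
$$df_P = {\rm diag}\,(\mu(f),\lambda(f)) \quad\text{with respect to } \langle v_1,v_2\rangle,$$
where $\mu(f)$ is the eigenvalue on $v_1 \in T_{C,P}$ and $\lambda(f)$ the eigenvalue on $v_2 \in T_{C_{11},P}$. Since $\omega_X$ is nowhere vanishing, evaluating $f^{*}\omega_X = \alpha(f)\omega_X$ at $P$ and using that $(df_P)^{*}$ acts on the line $\wedge^2 T_{X,P}^{*}$ by the determinant, I get $\alpha(f) = \det(df_P) = \mu(f)\lambda(f)$. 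By the definitions of the representations, $\mu(f) = d(f|_C)_P = d_{X,C,P}(f)$, so on the subgroup $G(X,C,P)$ one has $\mu(f)=1$ and hence $\alpha(f)=\lambda(f)$ there. Because $G(X,P)$ is precisely the set of $f \in G(X,C,P)$ with $\lambda(f)=1$, the homomorphism $\lambda = \alpha|_{G(X,C,P)}$ induces an injection $G(X,C,P)/G(X,P) \hookrightarrow \alpha({\rm Aut}\,(X/k))$. As $X$ is not supersingular, Theorem \ref{thm32} makes $\alpha({\rm Aut}\,(X/k))$ finite, whence $[G(X,C,P):G(X,P)] < \infty$.

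\emph{Part (2).} By Part (1) and Theorem \ref{thm21} it suffices to show that $G(X,C,P)$ is not finitely generated. Every $f \in G(X,C,P)$ restricts on $C \cong \P^1$ to an automorphism fixing $P=\infty$ with derivative $1$ there, hence to a translation $x \mapsto x + c(f)$ in the affine coordinate $x$. Sending $f \mapsto c(f)$ defines a group homomorphism $\Phi : G(X,C,P) \to (k,+)$, and $\mu$ is multiplicative. It therefore suffices to produce automorphisms whose $\Phi$-values generate $G_t = \langle t^n \mid n \in \Z\rangle$: then ${\rm Im}\,\Phi \supseteq G_t$, and since a subgroup of a finitely generated abelian group is finitely generated while $G_t$ is not (Lemma \ref{lem39}), neither ${\rm Im}\,\Phi$ nor $G(X,C,P)$ can be finitely generated.

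To realize the values $t^n$, the plan is to exhibit two automorphisms $g,h \in {\rm Aut}\,(X/k,C,P)$ with $g|_C : x \mapsto x+1$ and $h|_C : x \mapsto tx$. Then $g \in G(X,C,P)$ with $\Phi(g)=1$, while $(h^n g h^{-n})|_C : x \mapsto x + t^n$, so that $h^n g h^{-n} \in G(X,C,P)$ (as $\mu(h^n g h^{-n}) = \mu(g)=1$) and $\Phi(h^n g h^{-n}) = t^n$ for every $n \in \Z$; these generate $G_t$ inside ${\rm Im}\,\Phi$, and the previous paragraph then finishes the proof.

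\emph{The main obstacle} is the construction of $g$ and $h$ as honest automorphisms of $X$. Neither can come from the abelian surface $E \times F$: the automorphisms of $X$ induced from $E\times F$ form a finite group, and those preserving either of the two product elliptic fibrations act on the $x$-line only through the finite M\"obius group permuting the four branch points $\{0,1,t,\infty\}$, so no translation $x\mapsto x+1$ or scaling $x \mapsto tx$ is available this way. I would instead construct $g$ and $h$ from the genuinely K3-theoretic part of ${\rm Aut}\,(X/k)$, using a non-product Jacobian fibration of positive Mordell--Weil rank on $X$ and controlling the induced action near $P$ via Lemma \ref{lem33} and Proposition \ref{prop34}. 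This explicit computation, following \cite{Og89} and \cite{DO19}, is the substance of the remainder of the section; granting it, the argument above is complete.
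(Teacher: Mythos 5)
Your part (1) is correct and coincides with the paper's own argument: diagonalize $df_P$ via Proposition \ref{prop34}, observe that the canonical representation satisfies $\alpha(f)=\det(df_P)=\mu(f)\lambda(f)$, so that on $G(X,C,P)$ one has $\alpha=\lambda$ with kernel exactly $G(X,P)$, and conclude by the finiteness of ${\rm Im}\,\alpha$ from Theorem \ref{thm32}. Likewise, your reduction in part (2) --- the translation homomorphism $\Phi$ on $G(X,C,P)$, the conjugation formula $(h^n\circ g\circ h^{-n})|_C(x)=x+t^n$, and the conclusion via Lemma \ref{lem39} together with the facts that images of finitely generated groups and subgroups of finitely generated abelian groups are finitely generated --- is exactly the paper's plan, with $g=f_2$ and $h=f_1$ in its notation.

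However, there is a genuine gap: the automorphisms $g$ and $h$ are never constructed. You correctly call their existence ``the main obstacle,'' indicate roughly where they should come from, and then write ``granting it, the argument above is complete.'' What you grant is precisely the mathematical substance of part (2); everything you actually prove is the formal shell around it. The paper fills this in concretely: the divisors $D_1=C+C_{11}+F_1+C_{12}+E_2+C_{22}+F_2+C_{21}$ (Kodaira type $I_8$) and $D_2=C+2C_{11}+E_2+2C_{12}+E_3+2C_{13}+3F_1$ (type $IV^*$) each induce a (quasi-)elliptic fibration $\varphi_{D_i}:X\to\P^1$ with $D_i$ as a singular fiber (by \cite[Prop.~3.8]{DO19}, valid over $k$), with $C_{31},C_{41}$ sections of $\varphi_{D_1}$ and $C_{21},C_{31}$ sections of $\varphi_{D_2}$. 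Taking $C_{31}$, resp.\ $C_{21}$, as the zero section, the Mordell--Weil translations $f_1$ by $C_{41}$ and $f_2$ by $C_{31}$ are biregular automorphisms of $X$ (minimality of the K3 surface), and N\'eron's theorem \cite{Ne64} describes their action on the smooth locus of the fiber containing $C$: for $D_1$ this locus meets $C$ in $C\setminus\{0,\infty\}\simeq\G_m(k)$ and $f_1$ acts by multiplication by $t$, while for $D_2$ it meets $C$ in $C\setminus\{\infty\}\simeq\G_a(k)$ and $f_2$ acts by addition of $1$, using the normalization $C\cap C_{21}=\{0\}$, $C\cap C_{31}=\{1\}$, $C\cap C_{41}=\{t\}$, $C\cap C_{11}=\{\infty\}$ of Definition \ref{def31}. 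This computation is the only place where the transcendence of $t$ actually enters ${\rm Aut}\,(X/k)$; without it, and without verifying $f_1|_C(x)=tx$ and $f_2|_C(x)=x+1$, you have reduced the proposition to its hardest step rather than proved it.
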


\begin{proof}

First, we prove the assertion (1). 
Let $f \in {\rm Aut}\, (X/k, P)$. Then by Proposition \ref{prop34}, we have 
$$df_P(v_1) = \alpha_1(f)v_1\,\, ,\,\, df_P(v_2) = \alpha_2(f)v_2$$
for some $\alpha_1(f), \alpha_2(f) \in k^{\times}$. Then for the canonical representation $\alpha$ of ${\rm Aut}\, (X/k)$, we have
$$\alpha(f) = \alpha_1(f)\alpha_2(f)\,\, .$$
Then $\alpha(f) = \alpha_2(f)$ for $f \in G(X, C, P)$, as $\alpha_1(f) =1$ for $f \in G(X, C, P)$. Thus
$$G(X, P) = {\rm Ker}\, (\alpha|_{G(X, C, P)})$$
by Proposition \ref{prop34}, and therefore
$$[G(X,C, P) : G(X, P)] = |{\rm Im}\, (\alpha|_{G(X, C, P)})| \le |{\rm Im}\, (\alpha)| < \infty\,\, ,$$
by Theorem \ref{thm32}. This completes the proof of the assertion (1).  

Next we shall prove the assertion (2). Consider the group representation
$$\tau : G(X, C, P) \to {\rm Aut}(C, P)\,\, ;\,\, f \mapsto f|_C\,\, .$$
Let
$$\Gamma := \tau(G(X, C, P))\,\, ,\,\, f \in G(X, C, P)\,\, .$$ 
Then by the definition of $G(X, C, P)$, we have 
$f|_C(P) = P$ and $d(f|_C)_P = 1$ on $C = \P^1$. 
Under the affine coordinate $x$ of $C$, the automorphism $f|_C \in {\rm Aut}\,(C/k)$ is then of the form 
$$f(x) = x + a_f\quad \text{with} \quad a_f \in k\,\, .$$
Thus $\Gamma$ is isomorphic to a subgroup of the additive group $k = (k,+)$ and therefore $\Gamma$ is an abelian group with $\F_p$-linear space structure. 

Now, to conclude Proposition \ref{prop31} (2), it suffices to show that $\Gamma$ {\it do have} a non-finitely generated subgroup. Indeed, then, $\Gamma$ is not also finitely generated, as $\Gamma$ is an abelian group. Hence $G(X, C, P)$ is not finitely generated, as its image $\Gamma := \tau(G(X, C, P))$ is not finitely generated. Since $[G(X, C, P) : G(X, P)] < \infty$ by Proposition \ref{prop31},  $G(X, P)$ is not finitely generated as well, by Theorem \ref{thm21}.

In the rest, we will find a non-finitely generated subgroup of $\Gamma$ by constructing various (quasi-)elliptic fibrations with section on $X$. 
 
As in \cite{DO19}, consider the following two divisors $D_1$ and $D_2$ of Kodaira's type $I_8$ and $IV^*$ on $X$:
$$D_1 := C + C_{11} + F_1 + C_{12} + E_2 + C_{22} + F_2 + C_{21}\,\, ,$$
$$D_2 := C + 2C_{11} + E_2 + 2C_{12} + E_3 + 2C_{13} + 3F_1\,\, .$$
Observe also that 
$$(D_1.C_{31}) = (D_1.C_{41}) = 1\,\, ,\,\, (D_2.C_{21}) = (D_2.C_{31}) = 1\,\, .$$
Thus, by \cite[Prop. 3.8]{DO19}, which is also valid over any algebraically closed field $K$ (if one replaces the term "elliptic" there by "quasi-elliptic" when $K$ is of characteristic $2$, $3$), we obtain two (quasi-)elliptic fibrations 
$$\varphi_{D_1} : X \to \P^1$$
with $D_1$ as a singular fiber and two global sections $C_{31}$, $C_{41}$ meeting $C$, and 
$$\varphi_{D_2} : X \to \P^1$$
with $D_2$ as a singular fiber and two global sections $C_{21}$, $C_{31}$ meeting $C$. 

Choose $C_{31}$ as the zero section of $\varphi_{D_1}$ and $C_{21}$ as the zero section of $\varphi_{D_2}$. We now consider the Mordell-Weil groups ${\rm MW}\, (\varphi_{D_i})$ ($i=1$, $2$), that is, the group of the global sections of $\varphi_i$. Then ${\rm MW}\, (\varphi_i)$ is an abelian subgroup of ${\rm Aut}\,(X/k) = {\rm Bir}\,(X/k)$. 

Let $f_1$ and $f_2$ denote the automorphisms of $X$ given respectively by 
$C_{41} \in {\rm MW}(\varphi_{D_1})$ and $C_{31} \in {\rm MW}(\varphi_{D_2})$.  
As in the complex case \cite{Ko63} (see also \cite[Prop. 3.9]{DO19}), by a result of N\'eron (\cite{Ne64}), $f_1$ acts on 
$$C(k) \setminus ({\rm Sing}\, C)(k) = C(k) \setminus \{0, \infty\} = \G_{m}(k) = k^{\times}$$ 
by the multiplication by $t$ and $f_2$ acts on 
$$C(k) \setminus ({\rm Sing}\, C)(k) = C(k) \setminus \{\infty\} = \G_{a}(k) = k$$ 
by the addition by $1$, with respect to the affine coordinate $x$ of $C$ and the coordinate values $C \cap C_{11} = \{\infty\}$, $C \cap C_{21} = \{0\}$, $C \cap C_{31} = \{1\}$, $C \cap C_{41} = \{t\}$ in Definition \ref{def31}.
 
In particular, both $f_i$ ($i=1, 2$) preserve $C$ and the induced actions $f_i|_C \in {\rm Aut}\,(C/k)$ on $C$ are given, under the coordinate $x$, by 
$$f_1|_C(x) = tx \,\, ,\,\, f_2|_C(x) = x+1\,\, .$$
Thus 
$$(f_1|_C)^{n} \circ (f_2|_C) \circ (f_1|_C)^{-n}(x) =  x + t^n\,\, ,$$
i.e., the additive translation by $t^n$, and therefore 
$$f_1^{n} \circ f_2 \circ f_1^{-n} \in G(X, C, P)\,\, {\rm and}\,\, (f_1|_C)^{n} \circ (f_2|_C) \circ (f_1|_C)^{-n}\in \Gamma$$ 
for any integer $n$. 
Consider the following subgroup 
$$\Gamma_1 := \big\langle (f_1|_C)^{n} \circ (f_2|_C) \circ (f_1|_C)^{-n}\, |\, n \in \Z \big\rangle$$ 
of $\Gamma$. By the description above, $\Gamma_1$ is isomorphic to the group $G_t$ in Lemma \ref{lem39}. Thus, $\Gamma_1$ is not finitely generated by Lemma \ref{lem39}. This completes the proof of the second assertion (2).
\end{proof}

Let $\pi_1 : Y_1 \to X$ be the blow-up of $X$ at $P$ and $E_P \subset Y_1$ the exceptional curve. We choose $Q \in E_P(k) \setminus \{[v_1], [v_2]\}$. Here $v_1$ and $v_2$ are tangent directions of $C$ and $C_{11}$ at $P$. We then take the blow-up $\pi_2 : Y \to Y_1$ of $Y_1$ at $Q$. 

The following theorem completes the proof of Theorem \ref{thm1} (2). 

\begin{thm}\label{thm33}
${\rm Aut}(Y/k)$ is not finitely generated. 
\end{thm}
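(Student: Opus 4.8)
The plan is to realize ${\rm Aut}(Y/k)$ as an explicit subgroup $H$ of ${\rm Aut}(X/k, P)$ through the blow-down $\pi := \pi_1 \circ \pi_2 : Y \to X$, and then to recognize the non-finitely-generated group $G(X,P)$ of Proposition \ref{prop31}(2) as a finite-index subgroup of $H$. First I would set up the lifting dictionary. Since $X$ is the unique minimal model of $Y$, Remark \ref{rem1}(2) realizes ${\rm Aut}(Y/k)$ as a subgroup of ${\rm Aut}(X/k) = {\rm Bir}(X/k)$ via $\pi$. An automorphism $g \in {\rm Aut}(X/k)$ lies in the image precisely when it lifts through both blow-ups: it lifts to $Y_1$ if and only if $g(P) = P$, and then the induced automorphism of $Y_1$ acts on the exceptional curve $E_P = \P(T_{X,P})$ by the projectivization $\P(dg_P)$; this lift extends to $Y$ if and only if $\P(dg_P)$ fixes $Q$. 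Hence
$${\rm Aut}(Y/k) \cong H := \{\, g \in {\rm Aut}(X/k, P)\, |\, \P(dg_P)(Q) = Q\, \}\,\, .$$

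Next I would pin down $H$ using Proposition \ref{prop34}. For $g \in {\rm Aut}(X/k,P)$ write $dg_P(v_1) = \alpha_1(g)v_1$ and $dg_P(v_2) = \alpha_2(g)v_2$ as in the proof of Proposition \ref{prop31}. Because $Q$ was chosen distinct from the two eigendirections $[v_1]$ and $[v_2]$, the projective transformation $\P(dg_P)$ fixes $Q$ if and only if it is the identity, i.e. if and only if $dg_P$ is scalar, i.e. if and only if $\alpha_1(g) = \alpha_2(g)$. Thus $H = \{\, g \in {\rm Aut}(X/k,P)\, |\, \alpha_1(g) = \alpha_2(g)\,\}$, and the assignment $\beta(g) := \alpha_1(g) = \alpha_2(g)$ defines a homomorphism $\beta : H \to k^{\times}$ with ${\rm Ker}\, \beta = G(X, P)$.

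Then I would bound the index. By the identity $\alpha(g) = \alpha_1(g)\alpha_2(g)$ recorded in the proof of Proposition \ref{prop31}, on $H$ we have $\alpha(g) = \beta(g)^2$, so squaring carries ${\rm Im}\, \beta$ into ${\rm Im}\, \alpha$, which is finite by Theorem \ref{thm32} since $X$ is not supersingular. As the kernel of squaring in $k^{\times}$ is $\{\pm 1\}$, the group ${\rm Im}\, \beta$ is finite, whence
$$[H : G(X, P)] = |{\rm Im}\, \beta| < \infty\,\, .$$
Since $G(X,P)$ is not finitely generated by Proposition \ref{prop31}(2), Theorem \ref{thm21} forces $H \cong {\rm Aut}(Y/k)$ to be non-finitely-generated as well.

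I expect the main obstacle to be the first step: justifying carefully that every automorphism of $Y$ descends to ${\rm Aut}(X/k, P)$, so that the identification of ${\rm Aut}(Y/k)$ with a subgroup of ${\rm Aut}(X/k)$ is legitimate, and that the sole obstruction to lifting through the second blow-up is the fixing of the chosen third point $Q$ on $E_P$. Once this dictionary and the multiplicativity $\alpha = \alpha_1\alpha_2$ are in hand, the remainder is routine group theory combined with the finiteness of the canonical representation.
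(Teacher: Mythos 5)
Your proposal is correct and takes essentially the same route as the paper: both identify ${\rm Aut}(Y/k)$ with the subgroup of ${\rm Aut}(X/k,P)$ consisting of automorphisms whose differential at $P$ is scalar (because such an automorphism fixes the three distinct points $[v_1]$, $[v_2]$, $Q$ of $E_P\simeq \P^1$), and both then deduce $[{\rm Aut}(Y/k):G(X,P)]<\infty$ from the finiteness of the canonical representation (Theorem \ref{thm32}) before invoking Proposition \ref{prop31}(2) and Theorem \ref{thm21}. The descent step you flagged as the main obstacle is exactly the point the paper disposes of by observing $|K_Y|=\{E_P'+2E_Q\}$, which forces every $f\in{\rm Aut}(Y/k)$ to preserve $E_P'$ and $E_Q$ and hence to descend to ${\rm Aut}(X/k,P)$ fixing $Q$ on $E_P$; apart from that, your index bound via $\beta$ and squaring is, if anything, slightly more careful than the paper's assertion that $G(X,P)={\rm Ker}(\alpha|_{{\rm Aut}(Y/k)})$, which silently ignores possible elements with $df_P=-\id_{T_{X,P}}$.
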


\begin{proof} As $f|_{T_{X, P}} = \id_{T_{X, P}}$ for $f \in G(X, P)$, we have 
$$G(X/k, P) \subset {\rm Aut}\,(Y/k) \subset {\rm Aut}\, (X/k)$$
via $\pi_1 \circ \pi_2$. 
By Proposition \ref{prop31}, the group $G(X,P)$ is not finitely generated.  
So, if $[{\rm Aut}\, (Y/k) : G(X, P)] < \infty$, 
then the result follows from Theorem \ref{thm21}.

In what follows, we prove $[{\rm Aut}\, (Y/k) : G(X, P)] < \infty$. 
Observe that
$$|K_{Y}| = \{E_P' + 2E_{Q}\}\,\, ,$$
where $E_P'$ is the proper transform of $E_P$ and $E_{Q}$ is the exceptional divisor of the second blow-up $Y \to Y_1$ at $Q$. Thus, for every $f\in  {\rm Aut}\, (Y/k)$, we have
$$f(E_P') = E_{P}'\,\, ,\,\, f(E_{Q}) =  E_{Q}\,\, .$$
Therefore, via $p_2$ and $p_1$, we can identify 
$${\rm Aut}\, (Y/k) = {\rm Aut}\, (Y_1/k, E_P) \cap {\rm Aut}\,(Y_1/k, Q) \subset {\rm Aut}(X/k, P)\,\, .$$ 
Let $f \in {\rm Aut}\,(Y/k)$. We regard $f \in {\rm Aut}\, (Y_1/k)$ and $f \in {\rm Aut}\, (X/k, P)$ under the identification above. Then, by Proposition \ref{prop34}, $f$ fixes $[v_1]$ and $[v_2]$ on $E_P \subset Y_1$ pointwisely.  So, $f|_{E_P} \in {\rm Aut}\,(E_P/k)$ fixes three distinct points $[v_1], [v_2], Q \in E_p(k)$ pointwisely. Thus $f|_{E_P} = id_{E_{P}}$, as $E_P \simeq \P^1$. Therefore, for $f \in {\rm Aut}\, (X/k,P)$, we have $f \in {\rm Aut}\, (Y/k)$ if and only if $f|_{E_P} = id_{E_P}$, that is, if and only if $df_P = c(f) \id_{T_{X, P}}$ for some $c(f) \in k^{\times}$. Then $f^*\omega_X = c(f)^2 \omega_X$ 
and hence 
$$G(X, P) = {\rm Ker}\, (\alpha|_{{\rm Aut}\, (Y/k)} : {\rm Aut}\, (Y/k) \to k^{\times})\,\, .$$
Here $\alpha$ is the canonical representation of ${\rm Aut}\, (X/k)$ and $\alpha|_{{\rm Aut}\, (Y/k)}$ is the restriction of $\alpha$ to ${\rm Aut}\, (Y/k)$ under ${\rm Aut}\, (Y/k) \subset {\rm Aut}\, (X/k)$. Therefore  
$$[{\rm Aut}\, (Y/k) : G(X, P)] = |{\rm Im}\, \alpha|_{{\rm Aut}\, (Y/k)}| \le |{\rm Im}\, \alpha| < \infty\,\, ,$$
by Theorem \ref{thm32}. This completes the proof of Theorem \ref{thm33}.
\end{proof}

\begin{remark}\label{rem31} Under terminologies of \cite{DO19}, what we proved here is nothing but the fact that {\it $Y$ is a core surface associated to a very special triple $(X, C, P)$ over $k$}. 
\end{remark}

\section{Proof of Corollary \ref{cor1}}\label{sect4}

In this section, we shall prove Theorem \ref{thm41}. Theorem \ref{thm1} (2) and Theorem \ref{thm41} clearly imply Corollary \ref{cor1} in Introduction. 

\begin{theorem}\label{thm41} Let $k$ be the base field as in Introduction and let $d$ be an integer such that $d \ge 3$. Choose $d-2$ integers $g_i$ ($1 \le i \le d-2$) such that 
$$2 \le g_1 < g_2 < \ldots < g_{d-2}\,\, .$$ 
Let $Y$ be a smooth projective surface in Theorem \ref{thm1} (2) and let $C_{g_i}$ be a smooth projective curve of genus $g_i$ defined over $k$. Then 
$$Y_d := Y \times C_{g_1} \times \ldots \times C_{g_{d-2}}$$
is a smooth projective variety of $\dim\, Y_{d} = d$ such that ${\rm Aut}\, (Y_d/k)$ 
is discrete and not finitely generated.
\end{theorem}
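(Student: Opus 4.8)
The plan is to prove Theorem \ref{thm41} by reducing the assertion about $Y_d$ to facts about its factors, using the product structure to split both the tangent space (for discreteness) and the automorphism group (for non-finite generation). The key input is that the factors $C_{g_i}$ are pairwise non-isomorphic curves of genus $\ge 2$, which are rigid and have finite automorphism groups, while $Y$ carries the non-finitely generated automorphism group produced in Theorem \ref{thm1}(2). The strategy has two essentially independent parts: first establish that $\Aut(Y_d/k)$ respects the product decomposition, and then analyze the resulting factorization.

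First I would prove discreteness. Since each $C_{g_i}$ is a smooth projective curve of genus $g_i \ge 2$, we have $H^0(C_{g_i}, T_{C_{g_i}}) = 0$; likewise $H^0(Y, T_Y) = 0$ because $\Aut(Y/k)$ is discrete by Remark \ref{rem1}(2) (recall $Y$ is birational to the K3 surface $X$, so it inherits discreteness via the birational morphism to $X$). By the Künneth formula the tangent bundle of a product splits as a direct sum of pullbacks, so
$$H^0(Y_d, T_{Y_d}) = \bigoplus_{\text{factors}} H^0(\text{factor}, T_{\text{factor}}) \otimes (\text{products of } H^0 \text{ of structure sheaves}) = 0,$$
the last equality holding because every summand contains a vanishing tangent-space factor. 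By Remark \ref{rem1}(1), discreteness of $\Aut(Y_d/k)$ is equivalent to $H^0(Y_d, T_{Y_d}) = 0$, so this settles the discreteness claim.

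Next I would show $\Aut(Y_d/k)$ is not finitely generated. The crucial structural step is to prove
$$\Aut(Y_d/k) \cong \Aut(Y/k) \times \prod_{i=1}^{d-2} \Aut(C_{g_i}/k).$$
This factorization should follow from the theory of automorphisms of products of non-isomorphic varieties: because the factors $Y, C_{g_1}, \ldots, C_{g_{d-2}}$ are pairwise non-birational (a surface versus curves, and curves of distinct genera), no automorphism of $Y_d$ can mix them, so every automorphism is a product of automorphisms of the factors. Here one invokes that $Y$ has Kodaira dimension controlled by its K3 birational model and the curves have general type with distinct genera, forcing each $\Aut$-orbit of the obvious projections to be preserved. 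Once this product decomposition is established, finite generation of the whole is equivalent, via the fact that each $\Aut(C_{g_i}/k)$ is \emph{finite} (automorphism groups of curves of genus $\ge 2$ are finite), to finite generation of $\Aut(Y/k)$; and $\Aut(Y/k)$ is \emph{not} finitely generated by Theorem \ref{thm1}(2).

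\textbf{The main obstacle} I expect is rigorously justifying the product decomposition of the automorphism group, i.e. ruling out automorphisms that do not respect the projections to the factors. The clean way to do this is to identify the factors canonically inside $Y_d$ so that any automorphism must permute them; since $Y$ is a surface and the $C_{g_i}$ are curves of pairwise distinct genera, there can be no permutation at all, and each factor is individually preserved. One standard tool is to study the Albanese and Iitaka fibrations or the maximal rationally connected / fiber-space structure: the canonical projections onto the general-type curve factors are intrinsic, and the remaining factor recovers $Y$. An alternative, more elementary route is to note that any $f \in \Aut(Y_d/k)$ induces a permutation of the irreducible factors of a suitable canonically-defined fibration, and dimension plus genus invariants pin down the identity permutation; then a fiber-product argument shows $f$ splits. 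Modulo this decomposition—which is where the real content lies—the finite generation statement reduces immediately to Theorem \ref{thm21} together with the finiteness of the curve automorphism groups and Theorem \ref{thm1}(2).
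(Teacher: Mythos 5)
Your discreteness argument is exactly the paper's (K\"unneth, $\deg T_{C_{g_i}} = 2-2g_i < 0$, and $H^0(Y,T_Y)=0$), and your endgame (finiteness of each $\Aut(C_{g_i}/k)$, finite index, Theorem \ref{thm21}, Theorem \ref{thm1}(2)) also matches the paper. The gap is in the step you yourself flag as the main obstacle, and it is not closed by the arguments you sketch. The principle you invoke --- pairwise non-birational factors force every automorphism of a product to split as a product of automorphisms of the factors --- is false. Take a genus-$2$ curve $C$ with a degree-$2$ map $\phi : C \to E$ onto an elliptic curve; then $(x,e) \mapsto (x, e+\phi(x))$ is an automorphism of $C \times E$ that does not split, even though $C$ and $E$ are non-birational and cannot be permuted. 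What must be excluded is not a permutation of factors but a \emph{twist}: an automorphism of the form $F(y,z) = (g_z(y), g(z))$ covering an automorphism $g$ of one factor, with $g_z$ varying with $z$. Your ``fiber-product argument'' cannot rule these out, because it only applies once both projections are known to be preserved, and the projection $Y_d \to Y$ is not intrinsic: $Y$ contains rational curves, so there are nonconstant morphisms $Z \to Y$, whose graphs are copies of $Z$ inside $Y_d$ that are not fibers of that projection. Likewise ``Kodaira dimension of the factors'' is not enough: in characteristic $p$ even surfaces of general type can carry nonzero vector fields (Lang's examples, Remark \ref{rem41}), so characteristic-zero statements about automorphisms of products cannot be imported wholesale.

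The mechanism that actually kills the twists --- and this is the content of the paper's Lemmas \ref{lem42} and \ref{lem44} --- is rigidity, i.e.\ the very discreteness you established in your first part but never use again. First, the projection $p : Y_d \to Z$ \emph{is} intrinsic, being the $m$-th pluricanonical map of $Y_d$ for $m \gg 0$ (here one uses that $|mK_Y|$ consists of a single divisor, while $K_Z$ is ample); hence every $F \in \Aut(Y_d/k)$ satisfies $p \circ F = g \circ p$ and has the twisted form above. Second, the assignment $z \mapsto g_z$ is then a morphism from the connected variety $Z$ to the automorphism group scheme of $Y$, which is discrete, so $g_z$ is constant and $F$ splits. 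The same rigidity argument is needed inductively to prove $\Aut(Z/k) = \prod_i \Aut(C_{g_i}/k)$, where one also needs that there is no nonconstant morphism $C_{g_i} \to C_{g_j}$ for $g_i < g_j$ --- a statement stronger than non-isomorphy, and one which in characteristic $p$ requires the observation that inseparable morphisms do not change the genus, so that fibers of $Z \to C_{g_2} \times \cdots \times C_{g_{d-2}}$ are carried to fibers. With these two uses of discreteness inserted, your outline becomes the paper's proof; without them, the central decomposition $\Aut(Y_d/k) \cong \Aut(Y/k) \times \prod_i \Aut(C_{g_i}/k)$ is unproved.
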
 

In the rest of this section, we prove Theorem \ref{thm41}.

\begin{lemma}\label{lem41} Both ${\rm Aut}\, (C_{g_i}/k)$ and ${\rm Aut}\, (Y_d/k)$ are discrete. 
\end{lemma}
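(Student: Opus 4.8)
The plan is to prove discreteness by reducing, in each case, to the vanishing of global vector fields, using the characterization recalled in Remark \ref{rem1}(1): for a smooth projective variety $V$ over an algebraically closed field $K$, the group ${\rm Aut}\,(V/K)$ is discrete if and only if $H^0(V, T_V) = 0$. So the entire lemma becomes two cohomological vanishing statements, $H^0(C_{g_i}, T_{C_{g_i}}) = 0$ and $H^0(Y_d, T_{Y_d}) = 0$.

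First I would handle the curves $C_{g_i}$. Since each $g_i \ge 2$, the canonical bundle $\omega_{C_{g_i}}$ has degree $2g_i - 2 > 0$, so $T_{C_{g_i}} = \omega_{C_{g_i}}^{-1}$ has negative degree. A line bundle of negative degree on a smooth projective curve has no nonzero global sections, whence $H^0(C_{g_i}, T_{C_{g_i}}) = 0$ and ${\rm Aut}\,(C_{g_i}/k)$ is discrete. (Equivalently, one may simply invoke the classical fact that the automorphism group of a curve of genus at least $2$ is finite, hence certainly discrete.)

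Next I would treat $Y_d = Y \times C_{g_1} \times \cdots \times C_{g_{d-2}}$. The key structural input is that $Y$ itself is discrete: indeed $Y$ is birational to the K3 surface $X$ via the birational morphism $\pi_1 \circ \pi_2 : Y \to X$, and by Remark \ref{rem1}(2) the existence of a birational morphism from a smooth projective surface to a K3 surface forces $H^0(Y, T_Y) \subset H^0(X, T_X) = 0$, so $H^0(Y, T_Y) = 0$. Now for a product of smooth projective varieties the tangent bundle splits as a direct sum of pullbacks, $T_{Y_d} = \bigoplus_i \mathrm{pr}_i^* T_{V_i}$ where the factors $V_i$ are $Y$ and the $C_{g_i}$, and by the Künneth formula
\[
H^0(Y_d, \mathrm{pr}_i^* T_{V_i}) \;=\; H^0(V_i, T_{V_i}) \otimes \bigotimes_{j \neq i} H^0(V_j, \sO_{V_j}).
\]
Since $H^0(V_i, T_{V_i}) = 0$ for every factor (by the two vanishings above), every summand vanishes and hence $H^0(Y_d, T_{Y_d}) = 0$, so ${\rm Aut}\,(Y_d/k)$ is discrete.

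I do not expect a serious obstacle here; the statement is a routine consequence of the Künneth decomposition of $T_{Y_d}$ together with the two vanishings. The only point requiring minor care is the splitting $T_{Y_d} = \bigoplus_i \mathrm{pr}_i^* T_{V_i}$ and the corresponding Künneth identification of its global sections, which holds for arbitrary products of smooth projective varieties over a field and does not depend on the characteristic; the vanishing $H^0(X, T_X) = 0$ for the K3 surface $X$ in odd characteristic is exactly the content cited from \cite{RS76} in Remark \ref{rem1}(2), so this input is already available.
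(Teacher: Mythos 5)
Your proposal is correct and follows essentially the same route as the paper: both reduce discreteness to the vanishing $H^0(V,T_V)=0$ via Remark \ref{rem1}, kill the curve factors using $\deg T_{C_{g_i}} = 2-2g_i < 0$, kill the $Y$ factor using the inclusion $H^0(Y,T_Y)\subset H^0(X,T_X)=0$ from Remark \ref{rem1}(2), and combine these by the K\"unneth decomposition of $H^0(Y_d, T_{Y_d})$. Your write-up is merely a bit more explicit than the paper's, spelling out the splitting $T_{Y_d}=\bigoplus_i \mathrm{pr}_i^* T_{V_i}$ and the identification $H^0(Y_d,\mathrm{pr}_i^* T_{V_i}) = H^0(V_i,T_{V_i})\otimes\bigotimes_{j\neq i}H^0(V_j,\sO_{V_j})$, which the paper compresses into a single appeal to the K\"unneth formula.
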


\begin{proof} By the K\"unneth formula, we have
$$H^0(Y_d, T_{Y_d}) = H^0(Y, T_Y) \oplus H^0(C_{g_1}, T_{C_{g_1}}) \oplus 
\ldots \oplus  H^0(C_{g_{d-2}}, T_{C_{g_{d-2}}})\,\, .$$
As $\deg\, T_{C_{g_i}} = 2-2g_i < 0$, it follows that $H^0(C_{g_{i}}, T_{C_{g_{i}}}) = 0$. By our choice of $Y$, we have $H^0(Y, T_Y) = 0$ as well (cf. Remark \ref{rem1}). Hence $H^0(Y_d, T_{Y_d}) = 0$ and we are done.
\end{proof}

\begin{remark}\label{rem41} There is a smooth projective surface $S$ of general type with non-zero regular global vector field over $k$ (\cite{La83}). In particular, unlike in characteristic zero, ${\rm Aut}\, (V/k)$, and hence ${\rm Aut}\, ((Y \times V)/k)$, could be non-discrete even if $V$ is a smooth projective variety of general type. 
\end{remark}

Set 
$$Z := C_{g_1} \times \ldots \times C_{g_{d-2}}\,\, .$$

\begin{lemma}\label{lem42} One has  
$${\rm Aut}\, (Z/k) = {\rm Aut}\, (C_{g_1}/k) \times \ldots \times {\rm Aut}\, (C_{g_{d-2}}/k)$$
under the natural inclusion of the right hand side into the left hand side.  
\end{lemma}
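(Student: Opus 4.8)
The plan is to prove that every automorphism of the product $Z = C_{g_1} \times \ldots \times C_{g_{d-2}}$ respects the product decomposition and acts factorwise, which immediately gives the asserted isomorphism. The key structural input is that each $C_{g_i}$ is a smooth projective curve of genus $g_i \ge 2$, so each is its own canonical minimal model and has finite automorphism group; moreover the genera are \emph{strictly increasing}, $2 \le g_1 < g_2 < \ldots < g_{d-2}$, which is precisely the hypothesis that will prevent any "mixing" of the factors by an automorphism. The forward inclusion ${\rm Aut}\, (C_{g_1}/k) \times \ldots \times {\rm Aut}\, (C_{g_{d-2}}/k) \subset {\rm Aut}\, (Z/k)$ is obvious, so the whole content is the reverse inclusion.

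First I would recall that since each factor is a curve of general type (genus $\ge 2$), the Albanese and canonical constructions are functorial, but the cleanest route is to study how an arbitrary $f \in {\rm Aut}\, (Z/k)$ interacts with the projection maps $p_i : Z \to C_{g_i}$. The fibers of $p_i$ are the products $\prod_{j \ne i} C_{g_j}$, and I would aim to show that $f$ permutes the fibrations $\{p_i\}$ among themselves. The standard tool here is a rigidity/numerical argument: the fibers of each $p_i$ are themselves products of curves of genus $\ge 2$, so one can detect each factor $C_{g_i}$ intrinsically (up to the action of $f$) from the geometry of $Z$, independent of any chosen coordinates. Because an automorphism must send a fiber of one of these fibrations to a fiber of another, $f$ induces a permutation $\sigma \in S_{d-2}$ of the indices.

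The main obstacle — and the place where the genus hypothesis does its work — is showing that this permutation $\sigma$ must be the identity, so that $f$ preserves each projection $p_i$ individually and hence descends to an automorphism of each $C_{g_i}$. Here I would use that $f$ can only send the factor $C_{g_i}$ to a factor $C_{g_j}$ if those two curves are isomorphic (an automorphism of $Z$ restricts to isomorphisms between the relevant factor curves, as these are the images of the factorwise sub-objects). Since the genera are strictly increasing, $g_i \ne g_j$ for $i \ne j$, so $C_{g_i} \not\cong C_{g_j}$ for $i \ne j$, forcing $\sigma = \id$. This genus-rigidity is the heart of the matter; everything else is formal.

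Once $\sigma = \id$ is established, $f$ preserves every projection $p_i : Z \to C_{g_i}$, so it induces $f_i \in {\rm Aut}\, (C_{g_i}/k)$ with $p_i \circ f = f_i \circ p_i$ for each $i$. The product map $\prod_i p_i = \id_Z$ then shows $f = f_1 \times \ldots \times f_{d-2}$, giving the reverse inclusion and completing the proof. I would remark that the argument is essentially a characteristic-free application of the rigidity of products of curves of general type, so nothing about odd characteristic is needed at this step.
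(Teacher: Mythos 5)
Your strategy has the right overall shape, but its pivotal step is asserted rather than proved, and that step is essentially the entire content of the lemma. You claim that any $f \in {\rm Aut}\,(Z/k)$ must ``send a fiber of one of these fibrations to a fiber of another,'' so that $f$ induces a permutation $\sigma \in S_{d-2}$ of the factors; the only justification offered is an appeal to ``a rigidity/numerical argument'' and to the ability to ``detect each factor intrinsically.'' A priori, an automorphism of $Z$ could fail to respect any of the projections $p_i$, in which case your $\sigma$ is never defined. The fact that automorphisms of a product of curves of genus $\ge 2$ permute the canonical fibrations is a genuine theorem: it rests on the rigidity statement that every non-constant morphism from such a product to a curve of genus $\ge 2$ factors through one of the projections (proved via discreteness of the relevant Hom schemes, i.e.\ $H^0(C,\phi^*T_B)=0$, plus connectedness of the parameter space), and you neither prove this nor cite it. Your closing remark that the argument is ``characteristic-free'' also glosses over a real issue: naive Riemann--Hurwitz arguments apply only to separable morphisms, and de Franchis-type finiteness actually fails in characteristic $p$ (Frobenius gives non-constant self-maps of degree $>1$), so some care is genuinely required here.

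The paper fills this gap differently, and more cheaply, by using the hypothesis $2 \le g_1 < g_2 < \cdots < g_{d-2}$ at precisely this point rather than only to kill the permutation afterwards. Since genus cannot increase under a non-constant morphism of smooth projective curves --- and this survives in characteristic $p$ because an inseparable morphism factors through Frobenius iterates, which preserve genus --- there is no non-constant morphism $C_{g_1} \to C_{g_j}$ for $j \ge 2$. Hence any curve in $Z$ isomorphic to $C_{g_1}$ has constant projections to all higher-genus factors and must \emph{be} a fiber of $\pi : Z \to Z' := C_{g_2} \times \cdots \times C_{g_{d-2}}$; consequently every automorphism permutes the fibers of $\pi$, hence has the form $F(x,t) = (f_t(x), f(t))$, and the discreteness of ${\rm Aut}\,(C_{g_1}/k)$ (Lemma \ref{lem41}) forces $f_t$ to be independent of $t$; induction on the number of factors then gives the product decomposition. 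If you want to salvage your version, you must supply a proof of the factorization statement for morphisms $Z \to C_{g_j}$; once that is in place, your ``all fibrations at once'' argument does go through (and even avoids the discreteness step, since preserving every $p_i$ simultaneously yields $f = f_1 \times \cdots \times f_{d-2}$ directly), but as written the proposal leaves its central claim unproven.
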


\begin{proof} We prove the equality by the induction on $d-2$. If $d-2 = 1$, then the result is clear. Now assume $d-2 \ge 2$. Set 
$$Z' := C_{g_2} \times \ldots \times C_{g_{d-2}}\,\, .$$
Then $Z = C_{g_1} \times Z'$. We denote any closed point of $Z$ as $(x, t)$ where $x \in C_{g_{1}}$ and $t \in Z'$. 

Notice that genus does not change under any inseparable morphism. Thus, there is no non-constant morphism from $C_{g_i}$ to $C_{g_{j}}$ whenever $i < j$, that is, whenever $g_i < g_j$ (See eg. \cite[Chap IV, Sect 4.2]{Ha77}). Hence if $C \subset Z$ is isomorphic to $C_{g_1}$, then $C$ is a fiber of the projection to the second factor: 
$$\pi : Z = C_{g_1} \times Z' \to Z'\,\, ;\,\, (x, t) \mapsto t\,\, .$$
Hence ${\rm Aut}\, (Z/k)$ preserves $\pi$. It follows that any $F \in {\rm Aut}\, (Z/k)$, which is discrete, is of the form
$$F(x, t) = (f_t(x), f(t))$$
where $f \in {\rm Aut}\, (Z')$ and $f_t \in {\rm Aut}\, (C_{g_1}/k)$ parametrized by $t \in Z'$. As ${\rm Aut}\, (C_{g_1}/k)$ is discrete, it follows that $f_t$ does not depend on $t$. Thus 
$${\rm Aut}\, (Z/k) = {\rm Aut}\, (C_{g_1}/k) \times {\rm Aut}\, (Z'/k)\,\, ,$$
and the result follows from the induction on $d-2$. 
\end{proof}

\begin{lemma}\label{lem43} ${\rm Aut}\, (Z/k)$ is a finite group.  
\end{lemma}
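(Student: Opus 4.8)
By Lemma \ref{lem42}, we have reduced to showing that
$${\rm Aut}\, (Z/k) = {\rm Aut}\, (C_{g_1}/k) \times \ldots \times {\rm Aut}\, (C_{g_{d-2}}/k)$$
is finite, so it suffices to show that each factor ${\rm Aut}\, (C_{g_i}/k)$ is a finite group. The plan is to invoke the classical fact that the automorphism group of a smooth projective curve of genus $g \ge 2$ over an algebraically closed field is finite, which holds in arbitrary characteristic.

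First I would recall that each $C_{g_i}$ is a smooth projective curve of genus $g_i \ge 2$ by our choice $2 \le g_1 < \ldots < g_{d-2}$. The finiteness of ${\rm Aut}\, (C_{g_i}/k)$ for a curve of genus at least $2$ over an algebraically closed field is a standard result valid in all characteristics; see for instance \cite[Chap IV, Exercise 5.2 and Chap IV, Sect 4]{Ha77} together with the statement that $H^0(C_{g_i}, T_{C_{g_i}}) = 0$ (which follows from $\deg\, T_{C_{g_i}} = 2 - 2g_i < 0$, as already observed in the proof of Lemma \ref{lem41}), so that ${\rm Aut}\, (C_{g_i}/k)$ is a discrete, hence zero-dimensional, group scheme. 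The finiteness then follows because this group scheme is also of finite type and its underlying reduced group is an algebraic group acting faithfully on the canonically embedded curve, forcing it to be finite.

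Given finiteness of each factor, the product
$${\rm Aut}\, (C_{g_1}/k) \times \ldots \times {\rm Aut}\, (C_{g_{d-2}}/k)$$
is a finite group, and hence ${\rm Aut}\, (Z/k)$ is finite by Lemma \ref{lem42}. This completes the argument.

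The main obstacle, such as it is, lies only in correctly citing the finiteness of the automorphism group of a higher-genus curve in positive characteristic, rather than in any genuine difficulty; the argument is otherwise an immediate consequence of Lemma \ref{lem42} and classical curve theory. One should take mild care that the cited finiteness statement is the version valid over an algebraically closed field of arbitrary characteristic (as opposed to the sharper Hurwitz bound, which can fail in positive characteristic), but the finiteness itself holds unconditionally for $g \ge 2$.
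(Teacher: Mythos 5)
Your proposal is correct and takes essentially the same route as the paper: reduce via Lemma \ref{lem42} to finiteness of each ${\rm Aut}\,(C_{g_i}/k)$, and then obtain that finiteness from discreteness (Lemma \ref{lem41}) together with algebraicity of the automorphism group, realized as a Zariski closed subscheme of ${\rm PGL}(N,k)$ via a projective embedding. One minor fix: since $C_{g_i}$ may be hyperelliptic, you should use a \emph{pluricanonical} embedding (as the paper does) rather than the canonical map, which fails to be an embedding for hyperelliptic curves; with that change your sketch coincides with the paper's self-contained argument.
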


\begin{proof} This follows from Lemma \ref{lem42} and the fact that ${\rm Aut}\, (C_{g_i}/k)$ is a finite group. The finiteness of ${\rm Aut}\, (C_{g_i}/k)$ can be shown as follows. By considering pluricanonical morphisms of $C_{g_i}$, one can regard ${\rm Aut}\, (C_{g_i}/k)$ as a Zariski closed subscheme of ${\rm PGL}\,(N, k)$ for some positive integer $N$. As ${\rm Aut}\, (C_{g_i}/k)$ is discrete (Lemma \ref{lem41}), ${\rm Aut}\, (C_{g_i}/k)$ is then a reduced Zariski closed subscheme of dimension $0$ of ${\rm PGL}\,(N, k)$. As ${\rm PGL}\,(N, k)$ is noetherian, it follows that $|{\rm Aut}\, (C_{g_i}/k) | < \infty$ as claimed. 
\end{proof}

\begin{lemma}\label{lem44} One has 
$${\rm Aut}\, (Y_d/k) = {\rm Aut}\, (Y/k) \times {\rm Aut}\, (Z/k)$$
under the natural inclusion of the right hand side into the left hand side.    
\end{lemma}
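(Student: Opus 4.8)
The plan is to show the product decomposition $\mathrm{Aut}\,(Y_d/k) = \mathrm{Aut}\,(Y/k) \times \mathrm{Aut}\,(Z/k)$ by the same strategy used in Lemma \ref{lem42}: identify a canonically defined fibration that every automorphism must preserve, and read off the block-triangular form of an arbitrary automorphism. The key geometric input is that $Y$ and the curves $C_{g_i}$ are of very different types, so no nonconstant maps can mix the factors. Concretely, $Y$ is a rational surface obtained by blowing up a K3 surface twice, whereas each $C_{g_i}$ is a curve of genus $\ge 2$; in particular $Y$ is rationally connected in the relevant sense while the $C_{g_i}$ are not, and any two of the $C_{g_i}$ have distinct genera. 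So first I would pin down which fibration is intrinsic.

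First I would argue that the projection $p : Y_d \to Z$ onto the curve factors is preserved by $\mathrm{Aut}\,(Y_d/k)$. The cleanest way is via the canonical-type invariants already available: by Lemma \ref{lem41}, $H^0(Y_d, T_{Y_d}) = 0$, so $\mathrm{Aut}\,(Y_d/k)$ is discrete, and I can use the structure of $|K_{Y}|$ established in the proof of Theorem \ref{thm33} together with the canonical bundle of $Z$. The fibers of $p$ are the copies of $Y$, and these are exactly the surfaces in $Y_d$ with vanishing canonical-type numerical invariants of the curve directions; alternatively, the maximal rationally connected fibration (or the fibration whose fibers carry the surface $Y$, recognized by $\kappa = -\infty$ versus $\kappa \ge 1$ of the curve factors) is canonical and hence $\mathrm{Aut}$-invariant. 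Once $p$ is preserved, every $F \in \mathrm{Aut}\,(Y_d/k)$ has the shape $F(y, z) = (F_z(y), f(z))$ with $f \in \mathrm{Aut}\,(Z/k)$ and $F_z \in \mathrm{Aut}\,(Y/k)$ depending on $z \in Z$.

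Next I would eliminate the dependence on $z$. Exactly as in Lemma \ref{lem42}, the family $\{F_z\}_{z \in Z}$ is a morphism from the connected variety $Z$ into $\mathrm{Aut}\,(Y/k)$; since $\mathrm{Aut}\,(Y/k)$ is discrete (it is a subgroup of the discrete group $\mathrm{Aut}\,(X/k)$ by Remark \ref{rem1} and the construction of $Y$), such a morphism is constant. Hence $F_z = F_0 =: g$ for all $z$, so $F(y,z) = (g(y), f(z))$, which exhibits $F$ as the product $g \times f$. This gives the inclusion $\mathrm{Aut}\,(Y_d/k) \subset \mathrm{Aut}\,(Y/k) \times \mathrm{Aut}\,(Z/k)$; the reverse inclusion is the tautological one noted in the statement. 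Combining with Lemma \ref{lem42} then yields the full decomposition $\mathrm{Aut}\,(Y_d/k) = \mathrm{Aut}\,(Y/k) \times \mathrm{Aut}\,(C_{g_1}/k) \times \cdots \times \mathrm{Aut}\,(C_{g_{d-2}}/k)$.

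The main obstacle will be step one: rigorously showing that the projection $p$ to $Z$ is canonically defined and therefore $\mathrm{Aut}$-invariant, working over $k$ of positive characteristic without recourse to Hodge theory. In characteristic zero one would simply invoke that the $C_{g_i}$-directions are detected by $H^0$ of pluricanonical forms while the $Y$-direction contributes nothing, but in positive characteristic I must be careful that no inseparable morphism can transport a $Y$-fiber into a position mixing with the curve factors. The safe route is the one already used in Lemma \ref{lem42}, namely that genus is preserved under \emph{any} nonconstant morphism of smooth projective curves (including inseparable ones), so that a general curve isomorphic to $C_{g_i}$ sitting in $Y_d$ must be a fiber of $p$; any subtlety about rational curves on $Y$ versus the higher-genus factors is resolved by the same genus-comparison argument, since $Y$ contains no curves of genus $\ge 2$ that could map onto a $C_{g_i}$ factor. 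Once this invariance is secured, the remainder is the short rigidity argument above.
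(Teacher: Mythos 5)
Your overall skeleton matches the paper's proof: show that every automorphism of $Y_d$ preserves the projection $p : Y_d \to Z$, then use connectedness of $Z$ and discreteness of ${\rm Aut}\,(Y/k)$ to conclude that the fibre-wise automorphisms $F_z$ are constant in $z$. The rigidity step is fine. The gap is in the first step, and it begins with a factual error: $Y$ is \emph{not} a rational surface and not rationally connected --- it is a two-point blow-up of the Kummer K3 surface $X$, so $K_Y = E_P' + 2E_Q$ is effective and $\kappa(Y) = 0$, not $-\infty$ (indeed, the whole point of the paper is that $Y$ is birational to a K3 surface). Consequently the dichotomy ``$\kappa = -\infty$ versus $\kappa \ge 1$'' you invoke does not exist here, and the maximal rationally connected fibration cannot single out $p$, since the fibres of $p$ are copies of $Y$, which is not rationally connected. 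Your fallback ``safe route'' is also flawed: the fibres of $p$ are surfaces, not curves, so the genus-comparison argument of Lemma \ref{lem42} does not transfer verbatim. What you actually need is that every morphism from $Y$ to a curve of genus $\ge 2$ is constant, and your justification --- that ``$Y$ contains no curves of genus $\ge 2$'' --- is false: proper transforms of smooth members of ample linear systems on $X$ give curves on $Y$ of arbitrarily large genus. Moreover, genus counts alone cannot settle the question, because a curve of genus $g \ge g_j$ can a priori dominate $C_{g_j}$.

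The correct argument is the one you gesture at (``the structure of $|K_Y|$ together with the canonical bundle of $Z$'') but do not carry out, and it is exactly the paper's: use the pluricanonical map. Since $|mK_Y| = \{m(E_P' + 2E_Q)\}$ is a single rigid divisor, $h^0(Y, mK_Y) = 1$, while $K_Z$ is ample because each $g_i \ge 2$. By the K\"unneth formula, $H^0(Y_d, mK_{Y_d}) = H^0(Y, mK_Y) \otimes H^0(Z, mK_Z)$, so for $m \gg 0$ the $m$-th canonical map $\Phi_{|mK_{Y_d}|}$ is precisely $p$ followed by the pluricanonical embedding of $Z$. Every automorphism of $Y_d$ commutes with this intrinsically defined map, hence preserves $p$, and your rigidity step then finishes the proof. (Alternatively, one can show directly that any morphism $Y \to C_{g_j}$ is constant --- for instance because the Albanese variety of $Y$ is trivial, as $b_1(Y)=0$, so $Y$ admits no nonconstant map to a curve of positive genus --- but some such argument must replace the rational-connectedness claim; it cannot be extracted from statements about which genera occur among curves on $Y$.)
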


\begin{proof} We have $Y_d = Y \times Z$. As $Y$ is birational to a K3 surface, the $m-$th canonical map $\Phi_{|mK_{Y_d}|}$ of $Y_d$ with sufficiently large $m$ is nothing but the projection from $Y_d$ to the second factor:
$$p: Y_d := Y \times Z \to Z\,\, .$$
From now, our proof is very close to the proof of Lemma \ref{lem42}. We denote any closed point of $Y_d$ as $(y, z)$ where $y \in Y$ and $z \in Z$. 
As ${\rm Aut}\, (Y/k)$ preserves the $m-$th canonical map, it follows that any $G \in {\rm Aut}\, (Y_{d-2}/k)$, which is discrete, is of the form
$$G(y, z) = (g_z(y), g(z))$$
where $g \in {\rm Aut}\, (Z)$ and $g_z \in {\rm Aut}\, (Y/k)$ parametrized by $z \in Z$. As ${\rm Aut}\, (Y/k)$ is discrete, it follows that $g_z$ does not depend on $z$. Thus 
$${\rm Aut}\, (Y_d/k) = {\rm Aut}\, (Y/k) \times {\rm Aut}\, (Z/k)\,\, ,$$
as claimed.
\end{proof} 

By Lemma \ref{lem41}, ${\rm Aut}\, (Y_d/k)$ is discrete. By Lemma \ref{lem43} and Lemma \ref{lem44}, ${\rm Aut}\, (Y_d/k)$ has a finite index subgroup which is isomorphic to ${\rm Aut}\, (Y/k)$. By our choice of $Y$, the group ${\rm Aut}\, (Y/k)$ is not finitely generated (Theorem \ref{thm1} (2)). Hence by Theorem \ref{thm21}, ${\rm Aut}\, (Y_d/k)$ is not finitely generated as well. This completes the proof of Theorem \ref{thm41}. 
%%%%%%%%%%%%%%%%%%%%%%%%%%%%%%%%%%%%%%%%%%%%%

\end{document}